      \theoremstyle{plain}
      \newtheorem{theorem}{Theorem}[section]
      \newtheorem{lemma}[theorem]{Lemma}
      \newtheorem{corollary}[theorem]{Corollary}
      \theoremstyle{Conjecture}
      \theoremstyle{definition}
      \newtheorem{definition}[theorem]{Definition}
      \theoremstyle{remark}
      \newtheorem{remark}[theorem]{Remark}
      \def\@setcopyright{}
      \def\serieslogo@{}
\DeclareMathOperator{\spn}{span}
\begin{document}

   \author {{Ali Feizmohammadi}}
   \address{University College London}
   \email{a.feizmohammadi@ucl.ac.uk}





   


   \title[Local DN map and locally CTA geometries]{Uniqueness of a Potential From Local Boundary Measurements }


   \begin{abstract}
   Let $(\Omega^3,g)$ be a compact smooth Riemannian manifold with smooth boundary and suppose that $U$ is an open set in $\Omega$ such that $g|_U$ is the Euclidean metric. Let $\Gamma= \overline{U} \cap \partial \Omega$ be non-empty, connected, strictly convex and assume that $U$ is the convex hull of $\Gamma$. We will study the uniqueness of an unknown potential for the Schr\"{o}dinger operator $ -\triangle_g +  q  $ from the associated local Dirichlet to Neumann map, $C_q^{\Gamma,\Gamma}$. Indeed, we will prove that if the potential $q$ is a priori explicitly known in $U^c$, then one can uniquely reconstruct $q$ from the knowledge of $C^{\Gamma,\Gamma}_q$. 
   \end{abstract}







   \maketitle



\tableofcontents

\section{Problem Formulation}
 Let $(\Omega^3,g)$ be a three dimesnional compact smooth Riemannian manifold with smooth boundary. Let $\Gamma_D$, $\Gamma_N$ denote open subsets of the boundary $\partial \Omega$ and assume that $q \in C(\overline{\Omega})$ is an unknown function. Consider the Cauchy data:
$$ C_q^{\Gamma_D,\Gamma_N} = \{(u|_{\Gamma_D},\partial_{\nu}u|_{\Gamma_N}): (-\triangle_g + q)u=0 \hspace{2mm} \text{on} \hspace{2mm} \Omega, u \in H_{\triangle}(\Omega), supp(u|_{\partial \Omega})=\Gamma_D \}$$

\noindent Here, we are using the Hilbert space $H_{\triangle}(\Omega)=\{ u \in L^2(\Omega), \triangle_g u \in L^2(\Omega)$\}.  The trace $u|_{\partial \Omega}$ and the normal derivative $\partial_{\nu} u|_{\partial \Omega}$ belong to $H^{-\frac{1}{2}}(\partial \Omega)$ and $H^{-\frac{3}{2}}(\partial \Omega)$ respectively. Moreover, if $u \in H_{\triangle}(\Omega)$ and $Tr(u) \in H^{\frac{3}{2}}(\partial \Omega)$ then $ u \in H^2(\Omega)$. For more details about the space $H_{\triangle}$ we refer the reader to \cite{BU} and \cite{NS}.  

\noindent The partial data version of the Calder\'{o}n conjecture asks whether the knowledge of $C_q^{\Gamma_D,\Gamma_N}$ uniquely determines $q$?\\ 
\noindent Let us first discuss the main results in the literature. When $g$ is Euclidean and $\Gamma_D=\Gamma_N=\partial \Omega$ the uniqueness of the potential was proved in \cite{SU}. In the case where $g$ is Euclidean but $\Gamma_D, \Gamma_N$ may not be the whole boundary there also exists several uniqueness results (the following are extracted from \cite{KS}):

\begin{itemize}

\item{ The set $\Gamma_D$ is very small and $\Gamma_N$ contains $\partial \Omega \setminus \Gamma_D$. The uniqueness of the potential function in $\Omega$ is proved in \cite{KSU}.}

\item{$\Gamma_D=\Gamma_N$. The uniqueness of the potential function in the convex hull of $\Gamma_D$ is proved in \cite{GU}.}

\item{ $\Gamma_D=\Gamma_N$ is either part of a hyperplane or part of a sphere. The uniqueness of the potential function in $\Omega$ is proved in \cite{I}.}

\item{The linearized Calder\'{o}n problem, $\Gamma_D=\Gamma_N$ can be an arbitrary open subset of $\partial \Omega$. The uniqueness of the potential function is proved in \cite{DKSJU}.} 

\end{itemize}

\noindent When the metric is not flat, the most general result exists for conformally transversally anisotropic geometries (CTA) \cite{DKSU}. These are geometries where the manifold has a product structure $\Omega=\mathbb{R} \times \Omega_0$ with $(\Omega_0,g_0)$ being called the transversal manifold and the metric takes the form:
\[ g = c(t,x) (dt^2 + g_0(x)).\]

\noindent When $\Gamma_D=\Gamma_N=\partial \Omega$, uniqueness of the potential function was proved in \cite{DKSU} under a geometric assumption on the transversal manifold $\Omega_0$. Subsequently, in \cite{DKLS} the result was improved by requiring weaker restrictions on $\Omega_0$. In the case when $\Gamma_D, \Gamma_N$ may not be the whole boundary, local uniqueness of the potential function in CTA geometries was proved in \cite{KS}. Global uniqueness was also deduced in the same paper under an additional concavity assumption.

\noindent In this paper we will prove the following theorem:

\begin{theorem}
\label{partial}
Let $(\Omega^3,g)$ denote a compact smooth Riemannian manifold with smooth boundary. Let $U \subset \Omega$ be an open subset such that $\Gamma=\overline{U} \cap \partial \Omega$ is non-empty, connected and strictly convex. Suppose $U$ can be covered with a coordinate chart in which $g|_U$ is the Euclidean metric and that $U$ is the convex hull of $\Gamma$.   Suppose $q \in C(\overline{\Omega})$ is an unknown function and suppose $q-q_*$ is compactly supported in $U$ where $q_* \in C(\overline{\Omega})$ is an explicitly known continuous function. Then the knowledge of $C_q^{\Gamma,\Gamma}$ will uniquely determine $q$ on $U$.\\ 
\end{theorem}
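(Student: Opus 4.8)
The plan is to fix two admissible potentials with identical local data and reduce the problem, through complex geometric optics, to the vanishing of the Fourier transform of their difference on the Euclidean region $U$.

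\emph{Reduction and integral identity.} Suppose $q_1,q_2\in C(\overline\Omega)$ satisfy that $q_j-q_*$ is compactly supported in $U$ and that $C_{q_1}^{\Gamma,\Gamma}=C_{q_2}^{\Gamma,\Gamma}$; since $q_1=q_2=q_*$ on $U^c$, the difference $q_1-q_2$ is compactly supported in $U$. Let $u_1\in H_{\triangle}(\Omega)$ solve $(-\triangle_g+q_1)u_1=0$ with $\mathrm{supp}(u_1|_{\partial\Omega})\subset\Gamma$. By the equality of the Cauchy data there is a solution $u_2$ of $(-\triangle_g+q_2)u_2=0$, also with Dirichlet trace supported in $\Gamma$, having the same Cauchy data on $\Gamma$; hence $w:=u_1-u_2$ satisfies $(-\triangle_g+q_2)w=(q_2-q_1)u_1$ together with $w|_{\partial\Omega}=0$ and $\partial_\nu w|_{\Gamma}=0$. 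Pairing $w$ against any solution $v$ of $(-\triangle_g+q_2)v=0$ with $\mathrm{supp}(v|_{\partial\Omega})\subset\Gamma$ and applying Green's identity (interpreted through the weak trace theory of $H_{\triangle}(\Omega)$ recalled above), every boundary contribution drops out — on $\Gamma$ because $\partial_\nu w=0$, on $\partial\Omega\setminus\Gamma$ because $v$ vanishes there — leaving the exact relation
$$\int_U (q_1-q_2)\,u_1\,v\,dV_g=0.$$

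\emph{Construction of solutions.} It remains to exhibit enough admissible pairs $(u_1,v)$ — global solutions on $\Omega$ whose Dirichlet data are supported in $\Gamma$ — whose products recover the Fourier modes of $q_1-q_2$ on $U$. Because $g|_U$ is the Euclidean metric, in $U$ both equations read $(-\triangle+q_j)\,\cdot=0$ for the flat Laplacian, and I would aim to make $u_1,v$ behave in $U$ like the standard Sylvester–Uhlmann exponentials $e^{\rho_1\cdot x}(1+r_1)$ and $e^{\rho_2\cdot x}(1+r_2)$, with $\rho_j\in\mathbb{C}^3$, $\rho_j\cdot\rho_j=0$, $\rho_1+\rho_2=i\xi$, $|\rho_j|\to\infty$, and $\|r_j\|=O(|\rho_j|^{-1})$; in dimension three such $\rho_j$ exist for every $\xi\in\mathbb{R}^3$. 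The difficulty is that these must be genuine global solutions of the variable-coefficient equations on all of $\Omega$ while keeping their boundary data on $\Gamma$. I would obtain them by a Runge approximation: global solutions with Dirichlet data supported in $\Gamma$ are dense, in $L^2(U)$, among local solutions on $U$ of the corresponding Euclidean equation. The density follows by duality and unique continuation, and it is exactly here that the hypotheses on $\Gamma$ are used — the strict convexity of $\Gamma$ together with the fact that $U$ is the convex hull of $\Gamma$ guarantees that the relevant solution of the adjoint problem, which has vanishing Cauchy data on $\partial\Omega\setminus\Gamma$, can be continued to zero outside $U$. Equivalently, one may construct the exponentials directly by extending the flat limiting Carleman weight $\varphi(x)=\alpha\cdot x$ on $U$ (available since $U$ is locally CTA) to a Carleman weight on a neighbourhood in $\Omega$ and proving a boundary Carleman estimate for $e^{\varphi/h}(-\triangle_g+q)e^{-\varphi/h}$ whose boundary term carries the favourable sign of $\partial_\nu\varphi$ on $\partial\Omega\setminus\Gamma$, again furnished by the strict convexity.

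\emph{Conclusion.} Inserting the approximating solutions into the orthogonality relation and letting $|\rho_j|\to\infty$, the remainders $r_j$ and the approximation errors vanish in the limit and only the Euclidean Fourier term survives, giving
$$\int_{\mathbb{R}^3}(q_1-q_2)\,e^{i\xi\cdot x}\,dx=0\qquad\text{for every }\xi\in\mathbb{R}^3,$$
where we used that $q_1-q_2$ is supported in $U$. Fourier inversion then yields $q_1=q_2$ on $U$, which is the assertion of the theorem.

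\emph{Main obstacle.} The crux is the middle step: reconciling solvability of the global equations on $\Omega$ with the constraint that the Dirichlet data remain supported in $\Gamma$. Unlike the full-data problem one cannot prescribe boundary values away from $\Gamma$, so one must prove that admissible global solutions approximate Euclidean exponentials on $U$ (equivalently, that the uncontrolled boundary contributions are negligible), and this rests entirely on the geometry of $\Gamma$: strict convexity and the convex-hull property are what force the sign of $\partial_\nu\varphi$ on $\partial\Omega\setminus\Gamma$ and drive the unique-continuation step in the Runge approximation. A secondary technical point is that the ambient geometry off $U$ is arbitrary, so the flat Carleman weight must be extended to a genuine limiting Carleman weight near $\overline U$ and matched to the Euclidean one — the locally CTA structure near $U$ is precisely what makes this possible.
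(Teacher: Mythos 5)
Your reduction to the bilinear identity $\int_U (q_1-q_2)\,u_1 v\,dV_g=0$ coincides with the paper's first step, but after that you take a genuinely different route: the paper never uses Sylvester--Uhlmann exponentials or Fourier inversion. It builds a single glued Carleman weight $\phi_0=x_1\chi_0+F_\lambda\circ\omega$ which equals the linear weight $x_1$ on the slab of $U$ containing $\mathrm{supp}(q-q_*)$ and which, near the inaccessible boundary, is a function of a \emph{tangential} Fermi coordinate, so that $\partial_\nu\phi_0=0$ on $\partial\Omega\setminus\Gamma$; the Carleman estimate for test functions with $\partial_\nu v|_{\Gamma}=0$ then yields, by duality, CGO remainders vanishing on $\partial\Omega\setminus\Gamma$. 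The resulting solutions $e^{\tau(\pm x_1\pm ix_2)}(v_0+r)$ carry amplitudes $v_0=h(x_1+ix_2)\chi(x_3)$ concentrating on planes, and $\tau\to\infty$ recovers the integrals $\int_\Pi q\,h$ over all planes in the convex hull of $\Gamma$, whence $q|_U\equiv 0$ by local injectivity of the Radon transform (Helgason). So in the paper strict convexity enters through the plane foliation of $U$ and the support theorem, \emph{not} through any boundary sign condition; your variant (b) is unsupported as stated, since $g$ is arbitrary on $\Omega\setminus U$ and nothing forces a sign of $\partial_\nu\varphi$ on $\partial\Omega\setminus\Gamma$ — the paper's substitute is the neutral condition $\partial_\nu\phi_0=0$ there, arranged by hand via Fermi coordinates (this is exactly the content of its Lemma 2.2 and the class $\mathbb{D}$).

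Your main route (a), Runge approximation, is viable in principle and would even bypass the Radon transform, but its crux is stated incorrectly. In the duality argument the adjoint function $w$, solving $(-\triangle_g+q_2)w=f\mathbf{1}_O$ with $w|_{\partial\Omega}=0$, acquires vanishing Cauchy data on $\Gamma$ — orthogonality against the traces of admissible solutions forces $\partial_\nu w|_{\Gamma}=0$ — not on $\partial\Omega\setminus\Gamma$ as you assert, and it is then annihilated by unique continuation \emph{from} $\Gamma$ through the region where the dual source vanishes, a step that needs no convexity at all, only that $\Omega\setminus\overline{O}$ be connected. Consequently your density claim ``in $L^2(U)$ among local solutions on $U$'' is false as written: taking the approximation region to be all of $U$ puts the dual source up against $\Gamma$ and kills the unique-continuation step; one can only approximate in $L^2(O)$ for sets $O$ with $\mathrm{supp}(q_1-q_2)\subset O\Subset U$ and $\Omega\setminus\overline{O}$ connected, which suffices here precisely because $q_1-q_2$ is compactly supported in $U$ — but this must be said, and it is where the hypotheses actually get used in your scheme. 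A secondary point: the Runge approximation rate is not uniform in $|\rho_j|$, so you must pass to the approximation limit first at each fixed $\rho_j$ (legitimate, since the identity is bilinearly continuous on $L^2(O)\times L^2(O)$, giving $\int_O(q_1-q_2)E_1E_2=0$ exactly) and only then let $|\rho_j|\to\infty$. With these repairs your argument closes and would in fact use strict convexity only marginally; as written, however, the pivotal unique-continuation step misidentifies which Cauchy data vanish and attributes to convexity a role it does not play, so the proposal has a genuine gap at its central step.
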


\begin{figure}[h]
           \centering
	\begin{tikzpicture}
	\shade[ball color = white!, opacity = 0] (0,0) circle (2cm);
	\draw[line width = 0.2mm](0,0) circle (2cm);
	\draw[line width = 0.2mm] (-1.83,0.8) arc (180:360:1.83 and 0.15);
	\draw[line width = 0.1mm, dashed] (1.83,0.8) arc (0:180:1.83 and 0.15);
	\node[scale =1] at (0.2,0) {$(U,g_{\mathbb{E}^3})$};
	\node[scale =1] at (2,1.7) {$(\Omega,g)$};
	\node[scale =1] at (2.3,0) {$\Gamma$};
	\node[scale =1] at (-2.3,0) {$\Gamma$};
	\draw[line width = 0.2mm] (-1.83,-0.8) arc (180:360:1.83 and 0.15);
	\draw[line width = 0.1mm, dashed] (1.83,-0.8) arc (0:180:1.83 and 0.15);
	\end{tikzpicture}
\end{figure}

\begin{remark}
We would like to point out that in \cite{Feiz} we studied the problem of uniqueness of the potential function for the same geometries and in the case where $\Gamma_D=\Gamma_N=\partial \Omega$ and derived the uniqueness result. The method presented in that paper is quite robust and in fact it can be adjusted to yield Theorem ~\ref{partial}. Indeed, the key difference in this paper is the modification to the Carleman estimate in Lemma 5.4  \cite{Feiz}.
\end{remark}

\section{Carleman Estimates}

\noindent We begin by setting up local coordinate systems in our manifold $(\Omega,g)$ which will be useful for the construction of several key functions in the proof. Note that $U$ has a foliation by a family of planes $\mathbb{A}=\{\Pi_t\}_{t \in I}$ where $I = [0,1]$. We start by taking a fixed plane $\Pi \in \mathbb{A}$. A local coordinate system $(x_1,x_2,x_3)$ can be constructed in $U$ such that $\Pi=\{x_3=0\}$ with $(x_1,x_2)$ denoting the usual cartesian coordinate system on the plane $\Pi$ and $\partial_3$ denoting the normal flow to this plane. We can assume that the support of $q-q_*$ lies in the compact set $V \subset\subset \{-t_1\leq x_3 \leq t_2\}$ with $t_1,t_2>0$. In this framework $U= \cup_{c=-t_1-2\delta_1}^{c=t_2+2\delta_2} \{x_3=c\}$ with $\delta_i >0$ for $i \in \{1,2\}$.\\

\noindent Throughout the paper we will use the Fermi coordinates near a surface. Let us recall the construction of Fermi Coordinates in a Riemannian manifold $(M^3,g)$ near a non-degenerate orientable surface $\Sigma$. We will follow \cite{pacard} here.\\
\noindent  Let  $N$ denote the normal unit vector field on $\Sigma$ which defines the orientation of $\Sigma$. We make use of the exponential map to define:
$$ Z(y, z) := Exp_y(z N(y))$$
Here $y \in \Sigma$ and $z \in \mathbb{R}$. The implicit function theorem implies that $Z$ is a local diffeomorphism from a neighborhood of a point $(y, 0) \in \Sigma \times \mathbb{R}$ onto a neighborhood of $ y \in M$. For any $z \in \mathbb{R}$ we define $\Sigma_z = \{Z(y,z) \in M: y \in \Sigma\}$. Let $g_z$ denote the induced metric on $\Sigma_z$. Gauss's Lemma implies that:
$$ Z^* g = dz^2 + g_z$$ 
Here, $g_z$ is considered as a family of metrics on $T\Sigma$ smoothly depending on the variable $z$. In fact we have the following Taylor series expansion near $\{z=0\}$:
$$ g_z = g_0 - 2zh_0 + O(z^2)$$
Here, $g_0$ and $h_0$ denote the induced metric and the second fundamental form on $\Sigma$ respectively.\\

\noindent With this review of the Fermi coordinates, let us proceed with the construction of the local coordinates in our manifold $(\Omega,g)$.

\noindent We will denote the region outside of $V$ and above $\Pi$ by $W_u$ and the other remaining region outside of $V$ and below $\Pi$ by $W_l$. Let us consider the two surfaces $S_u = (\partial \Omega \setminus \overline{U}) \cap W_u$ and $ S_l = (\partial \Omega \setminus \overline{U}) \cap W_l$. Let $(z_1,z_2,z_3)$ and $(s_1,s_2,s_3)$ denote the Fermi coordinates near the two surfaces $S_l$ and $S_u$ respectively.  Recall that in these local coordinates we have that:
$$ Z^*g = dz_3^2 + g_{z_3}$$
near the surface $S_l$ and:
$$Z^*g =ds_3^2+ g_{s_3}$$
near the surface $S_u$.

\begin{definition}
Let us define two smooth functions $\omega:\Omega \to \mathbb{R}$ and $\tilde{\omega}:\Omega \to \mathbb{R}$ as follows:
\begin{itemize}

\item{Let $\omega:\Omega \to \mathbb{R}$ be any smooth function such that $d\omega \neq 0$ everywhere in $\Omega$, $\omega(x) \equiv x_3$  for $ -t_1-\delta_1 \leq x_3 \leq t_2+\delta_2$, $\omega \equiv s_1$ near $S_u$ and $\omega \equiv z_1$ near $S_l$.}
\item{Let $\tilde{\omega}:\Omega \to \mathbb{R}$ be any smooth function such that $\tilde{\omega}(x) \equiv x_2$  for $ x \in U$.}
\end{itemize}
\end{definition} 

\noindent Clearly existence of such a function as $\tilde{\omega}$ is trivial. The existence of such a function as $\omega$ will be the content of Lemma ~\ref{nontrapping}:

\begin{lemma}
\label{nontrapping}
There exists a function $\omega: \Omega \to \mathbb{R}$ satisfying the above properties.
\end{lemma}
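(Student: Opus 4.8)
The plan is to regard $\omega$ as a submersion $\Omega\to\R$, so that the entire content of the lemma is the condition $d\omega\neq0$ everywhere; the three prescribed identities are harmless because the sets on which they are imposed — a neighborhood of the Euclidean slab $\{-t_1-\delta_1\le x_3\le t_2+\delta_2\}$, a collar of $S_u$, and a collar of $S_l$ — can be taken pairwise disjoint (indeed $S_u,S_l\subset\partial\Omega\setminus\overline U$ are disjoint from $\overline U\supset\Gamma$ and from one another). Thus I may simply declare $\omega\equiv x_3$, $\omega\equiv s_1$, $\omega\equiv z_1$ on these three regions, each time with nowhere-vanishing differential, and the only genuine task is to extend $\omega$ over the remaining transition region $T=\Omega\setminus(\text{these three sets})$ while keeping $d\omega\neq0$.

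The extension I would carry out by the flow method. First I would build a single smooth, nowhere-vanishing vector field $X$ on $\Omega$ with $X=\partial_3$ on the slab, $X=\partial_{s_1}$ near $S_u$, and $X=\partial_{z_1}$ near $S_l$, so that $X\omega\equiv1$ on each prescribed region; such a field exists because $\Omega$ is contractible (a ball, as in the figure). I would then arrange $X$ to be \emph{nontrapping}: every maximal integral curve enters a prescribed region through one end, traverses $T$, and re-enters a prescribed region, meeting a fixed transverse cross-section exactly once. Defining $\omega$ on $T$ by the transport equation $X\omega=1$ — so that $\omega$ at a point equals its value where the backward flow-line first meets a region on which $\omega$ is already known, plus the elapsed flow-time — yields a smooth function with $d\omega(X)\equiv1$, hence $d\omega\neq0$ throughout $T$, which matches the prescribed data on the overlaps by construction.

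The main obstacle is exactly the nontrapping requirement, equivalently the absence of interior critical points: a naive interpolation between the flat leaves $\{x_3=c\}$ and the Fermi leaves $\{s_1=c\}$, $\{z_1=c\}$ could force two leaves to become tangent, so that $d\omega=0$ there, or could produce a closed or trapped orbit of $X$ on which the transport equation is inconsistent. This is where the geometric hypotheses enter: strict convexity of $\Gamma$ together with $U=\mathrm{conv}(\Gamma)$ forces the planes $\Pi_t$ to sweep the slab monotonically and to exit transversally through $\Gamma$, so that they can be continued into $W_u$ and $W_l$ and glued to the boundary-adapted foliations near $S_u,S_l$ without folding back; choosing the orientations of $s_1$ and $z_1$ compatibly with that of $x_3$ then allows $X$ to be completed to a nontrapping field. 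I expect verifying this gluing — that the slab foliation and the two Fermi foliations assemble into a single critical-point-free foliation of $\Omega$ — to be the crux, the remainder being a routine partition-of-unity construction together with the ODE integration above.
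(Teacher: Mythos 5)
Your scheme has a genuine gap: the transport construction reduces the lemma to the existence of a \emph{nontrapping} vector field $X$ agreeing with $\partial_3$, $\partial_{s_1}$, $\partial_{z_1}$ on the prescribed regions, and this is essentially a restatement of the lemma itself (given an $\omega$ as in the lemma, $X=\nabla\omega/|\nabla\omega|^2$ is a nontrapping field of the required kind; conversely your transport recovers $\omega$ from $X$), so deferring the nontrapping verification as ``the crux'' leaves the entire content of the lemma unproved. Worse, the justification you sketch for that crux cannot work as stated: strict convexity of $\Gamma$ and $U=\mathrm{conv}(\Gamma)$ constrain the geometry only inside $U$, whereas the trapping danger lives in $\Omega\setminus U$, about which Theorem~\ref{partial} assumes nothing. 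In particular $\Omega$ need not be contractible --- your existence argument for $X$ invokes ``$\Omega$ is a ball, as in the figure,'' but the figure is only an illustration, not a hypothesis --- and the regions $W_u$, $W_l$ may carry handles around which any naively interpolated field has closed or trapped orbits, with no mechanism in your argument for destroying them. There is also a second, quieter gap: on an orbit running from the slab, where $\omega$ must equal $x_3$, into the collar of $S_u$, where $\omega$ must equal $s_1$, the equation $X\omega=1$ forces the elapsed flow time between the exit point $p$ and the entry point $q$ to equal exactly $s_1(q)-x_3(p)$; your construction imposes no such time normalization, so the value transported from behind will generically fail to match the prescribed value at the far end (likewise for orbits returning to the same collar), and repairing this by rescaling $X$ orbit-by-orbit, or by solving $X\omega=f$ for a suitable positive $f$, requires a smoothness argument you do not supply.

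The paper avoids all of this by working with the function directly and removing its critical points perturbatively: take any smooth extension $\omega_0$ with the prescribed values; perturb it, away from the constrained sets, to a Morse function $\omega_1$ with finitely many isolated critical points $b_k$; since $\dim\Omega=3>2$, join each $b_k$ to points just outside the boundary by disjoint embedded arcs $\gamma_k$ avoiding $V$, $S_u$ and $S_l$; then add small perturbations $\epsilon(\alpha p_1+\beta p_2+\lambda p_3)\eta_k$ supported in Morse-coordinate neighborhoods, each of which slides the critical point a definite small distance along $\gamma_k$ without creating new critical points elsewhere, and iterate (using compactness) until every critical point is expelled through $\partial\Omega$. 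That argument needs no topological assumption on $\Omega$, no nontrapping claim, and no time-matching. Since any proof of your nontrapping assertion would in effect have to carry out some such critical-point expulsion anyway, the honest fix is to replace the final paragraph of your proposal with this Morse-theoretic perturbation argument.
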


\begin{proof}

Recall that Morse Lemma states the following: 
Let $b$ be a non-degenerate critical point of $f: \Omega \to \mathbb{R}$. Then there exists a chart $(p_1,p_2,p_3)$ in a neighborhood of $b$ such that $$f(p)= f(b)-p_1^2-...-p_{\alpha}^2 + p_{\alpha+1}^2+...+p_3^2$$ Here $\alpha$ is equal to the index of $f$ at $b$.\\
Define $\omega_0: \Omega \to \mathbb{R}$ such that $\omega_0(x) \equiv x_3$  for $ -t_1-\delta_1 \leq x_3 \leq t_2+\delta_2$, $\omega_0 \equiv s_1$ near $S_u$ and $\omega_0 \equiv z_1$ near $S_l$ . If $d\omega_0 \neq 0$ anywhere then the proof is complete so let us suppose that $\omega_0$ has critical points. We know that a generic smooth function is Morse and therefore it has isolated critical points. By using a small $C^{\infty}$ purturbation we can find a smooth function $\omega_1(x)$ such that $\omega_1(x) \equiv x_3$  for $ -t_1-\delta_1 \leq x_3 \leq t_2+\delta_2$, $\omega_1 \equiv s_1$ near $S_u$, $\omega_1 \equiv z_1$ near $S_l$ and such that $\omega_1(x)$ has isolated critical points and thus by compactness a finite number of isolated critical points $b_k$ for $1 \leq k \leq L$. We will assume without loss of geneality that the index of these critical points is zero.\\ Since $\dim \Omega=3>2$, we can connect these critical points with points just outside the boundary by a family of disjoint paths that do not intersect $V$ or $S_l$ or $S_u$. The idea here is to remove these critical points from $\Omega$ by pushing them out of the manifold. We will denote these curves by $\gamma_k$. Let $V_k$ denote the neighborhood around $b_k$ for which the Morse lemma holds. Choose $h$ small enough such that the geodesic ball of radius $h$ around $b_k$ is inside $V_k$ namely $B_{b_k}(h) \subset V_k$. Take $$\omega_2(x)= \omega_1(x) + \epsilon (\alpha p_1 + \beta p_2 +\lambda p_3) \eta_k(x)$$ where $\eta_k$ is a smooth function compactly supported in $V_k$ and such that $\eta_k \equiv 1 $ in the ball $B_{b_k}(\frac{h}{2})$. It is clear that for $\epsilon$ small enough we still have that $\omega_2(x) \equiv x_3$  for $ -t_1-\delta_1 \leq x_3 \leq t_2+\delta_2$, $\omega_2 \equiv s_1$ near $S_u$ and $\omega_2 \equiv z_1$ near $S_l$ . Furthermore we can see that for $\epsilon$ small enough the critical points of $\omega_2$ outside $V_k$ will remain the same and the critical point of $\omega_2$ inside $V_k$ must be in the ball $B_{b_k}(\frac{h}{2})$. Hence the critical point in $V_k$ will 'move' from $b_k$ to the point with local coordinate $(p_1,p_2,p_3) = (\frac{\epsilon \alpha}{2},\frac{\epsilon \beta}{2},\frac{\epsilon \lambda}{2})$. Since $\Omega$ is compact, it is clear that we can move the critcal points $b_k$ along their respective curves $\gamma_k$ and essentially construct a smooth function $\omega$ satisfying the desired properties. \\
\end{proof}
\bigskip


\begin{definition} 
\label{D}
$$\mathbb{D} := \{ v \in C^2(\Omega) : v|_{\partial \Omega}=0, \partial_{\nu} v|_{\Gamma} =0\} $$
\end{definition}

\begin{definition}
Let us define two globally defined $C^{k-1}(\overline{\Omega})$ functions $\chi_0 : \Omega \to \mathbb{R}$ and $F_{\lambda}: \mathbb{R} \to \mathbb{R}$  as follows:
 \[
    \chi_0(x) = \left\{\begin{array}{lr}
        1, & \text{for }  -t_1<x_3<t_2\\
        (1-(\frac{x_3-t_2}{\delta_2})^{8k})^k, & \text{for } t_2 \leq x_3 \leq t_2+\delta_2\\
    (1-(\frac{x_3+t_1}{\delta_1})^{8k})^k, & \text{for } -t_1-\delta_1 \leq x_3 \leq -t_1\\
         0  & \text{otherwise }
        \end{array}\right\}
  \]

 \[
    F_{\lambda} (x) = \left\{\begin{array}{lr}
        0, & \text{for }   -t_1<x<t_2\\
        e^{\lambda (\frac{x-t_2}{\delta_2})^2} (\frac{x-t_2}{\delta_2})^{2k}, & \text{for } t_2 \leq x  \\
       e^{\lambda (\frac{x+t_1}{\delta_1})^2} (\frac{x+t_1}{\delta_1})^{2k}  , & \text{for }  x \leq -t_1\\
        \end{array}\right\}\\
\\
  \]
\end{definition}

\bigskip




\noindent Using the explicit functions above, we can proceed with the following key lemma that will help us obtain a Carleman estimate in $(\Omega,g)$. The proof of Lemma ~\ref{hormander} will closely follow the proof presented in \cite{Feiz}.

\begin{lemma}
\label{hormander}

Let $ \phi_0(x_1,x_2,x_3) = x_1 \chi_0(x) + (F_{\lambda}\circ \omega)(x)$ where  $k \geq 1$ is an arbitraty integer and $\lambda(\Omega, k,||g_{ij}||_{C^2})$ is sufficiently large. Then the H\"{o}rmander hypo-ellipticity condition is satisfied in $\Omega$, that is to say:
\[ D^2\phi_0 (X,X) + D^2\phi_0(\nabla \phi_0,\nabla\phi_0) \geq 0\]
whenever $|X|=|\nabla \phi_0|$ and $\langle \nabla\phi_0, X \rangle =0$.\\
\end{lemma}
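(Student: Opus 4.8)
The plan is to verify the inequality pointwise by decomposing $\Omega$ into the three regions dictated by $\chi_0$, after first making the observation that the condition is vacuous wherever $\nabla\phi_0$ vanishes: if $\nabla\phi_0(x)=0$ then the constraint $|X|=|\nabla\phi_0|=0$ forces $X=0$, and both terms vanish. So I only need to argue at points where $\nabla\phi_0\neq 0$. In the \emph{core} region $-t_1<x_3<t_2$ we have $\chi_0\equiv 1$, $F_\lambda\circ\omega\equiv 0$, and $g$ is Euclidean, so $\phi_0=x_1$ is linear, $D^2\phi_0\equiv 0$, and the inequality holds with equality.

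In the \emph{outer} region, where $\chi_0\equiv 0$, we have $\phi_0=F_\lambda\circ\omega$, hence $\nabla\phi_0=F_\lambda'(\omega)\nabla\omega$ and $D^2\phi_0=F_\lambda''(\omega)\,d\omega\otimes d\omega+F_\lambda'(\omega)D^2\omega$. At a point with $\nabla\phi_0\neq 0$ necessarily $F_\lambda'(\omega)\neq 0$, since Lemma~\ref{nontrapping} gives $\nabla\omega\neq 0$. Substituting, and using $\langle d\omega,\nabla\phi_0\rangle=F_\lambda'(\omega)|\nabla\omega|^2$, the sum becomes
\[
F_\lambda''(\omega)\big[\langle d\omega,X\rangle^2+(F_\lambda'(\omega))^2|\nabla\omega|^4\big]+F_\lambda'(\omega)\big[D^2\omega(X,X)+(F_\lambda'(\omega))^2 D^2\omega(\nabla\omega,\nabla\omega)\big].
\]
The first bracket is nonnegative because $F_\lambda''>0$; discarding $\langle d\omega,X\rangle^2$ and using $|X|^2=(F_\lambda'(\omega))^2|\nabla\omega|^2$ together with $c\le|\nabla\omega|\le C$ (compactness and $d\omega\neq 0$) and $\|D^2\omega\|\le\Lambda$ (controlled by $\|g_{ij}\|_{C^2}$), I would bound the expression below by
\[
(F_\lambda'(\omega))^2\,|F_\lambda'(\omega)|\Big(\frac{F_\lambda''(\omega)}{|F_\lambda'(\omega)|}\,c^4-2\Lambda C^2\Big).
\]
A direct computation using the Gaussian factor shows $F_\lambda''/|F_\lambda'|\ge C_0\sqrt{\lambda}$ uniformly on $\{F_\lambda'\neq 0\}$ for a constant $C_0>0$ depending only on $k$ and the $\delta_i$ (via an AM--GM bound on the logarithmic derivative), so choosing $\lambda$ large depending on $\Omega,k,\|g_{ij}\|_{C^2}$ makes this nonnegative.

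In the \emph{transition} slab $t_2\le x_3\le t_2+\delta_2$ (the lower slab being identical) the metric is Euclidean and $\omega=x_3$, so $\phi_0=x_1\chi_0(x_3)+F_\lambda(x_3)$, the Hessian has only the entries $\partial_{13}\phi_0=\chi_0'$ and $\partial_{33}\phi_0=x_1\chi_0''+F_\lambda''$, and writing $\nabla\phi_0=(\chi_0,0,p_3)$ with $p_3=x_1\chi_0'+F_\lambda'$ the quantity to control is
\[
(x_1\chi_0''+F_\lambda'')(X_3^2+p_3^2)+2\chi_0'(X_1X_3+\chi_0 p_3).
\]
Away from the junction $x_3=t_2$ (say $u:=(x_3-t_2)/\delta_2\ge\epsilon_0$), $F_\lambda'$ is bounded below, $F_\lambda''\sim\lambda^2$, and the bounded non-convex perturbation $x_1\chi_0$ is absorbed exactly as in the outer region. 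The genuinely delicate case, which I expect to be the main obstacle, is the neighborhood of the junction, where $F_\lambda''\to 0$: here one must show that for \emph{every} admissible direction $X$ (in particular when $X_3\to 0$) the convexity of $F_\lambda$ still beats the indefinite term $2\chi_0'(X_1X_3+\chi_0 p_3)$. The mechanism is that the power $8k$ in the cutoff forces $\chi_0-1,\chi_0',\chi_0''=O(u^{8k-2})$, which vanish to far higher order than the competing quantities $F_\lambda''\sim u^{2k-2}$ and $p_3\sim u^{2k-1}$; a short comparison of orders of vanishing then shows the positive term dominates as $u\to 0$, and this is precisely why the cutoff is designed with the exponent $8k$. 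The matching at the far junction $x_3=t_2+\delta_2$ is again handled as in the outer region, since there $F_\lambda',F_\lambda''\neq 0$.
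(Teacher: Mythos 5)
Your proposal is correct and follows essentially the same route as the paper: the identical three-region decomposition (core slab where $D^2\phi_0\equiv 0$, Euclidean transition slabs, and the outer region where $\phi_0=F_\lambda\circ\omega$), with the same mechanism in each — in the outer region the convexity term $F_\lambda''(F_\lambda')^2|\nabla\omega|^4$ beating the $F_\lambda' D^2\omega$ terms for $\lambda$ large (the paper compares $(F_\lambda')^2F_\lambda''\sim\lambda^4$ against $|F_\lambda'|^3\sim\lambda^3$ where you use $F_\lambda''/|F_\lambda'|\ge C_0\sqrt\lambda$), and in the transition slab the $8k$-exponent in $\chi_0$ forcing its derivatives to vanish to higher order than $F_\lambda''$ and $\partial_3\phi_0$. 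The one organizational difference is that the paper never needs your near/far-junction case split: by the AM--GM bounds $|F_\lambda'|\ge\frac{4}{\delta_2}\sqrt{\lambda k}\,u^{2k}$, $\partial_{33}\phi_0\ge\frac{1}{2}F_\lambda''$ together with $u^{8k-1}\le u^{2k}$ on $[0,1]$, it gets domination uniformly in $u$ for $\lambda$ large (proving your two terms are separately nonnegative there), so the order-of-vanishing comparison you defer at the junction is precisely the computation the paper carries out explicitly — and, as you anticipate, it closes without circularity since the good terms only increase with $\lambda$.
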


\begin{proof}[Proof of Lemma ~\ref{hormander}]

The proof will be divided into three parts. We will consider the the three regions $A_1=\{ -t_1 \leq x_3 \leq t_2 \}$ , $ A_2=\{  t_2 \leq  x_3 \leq t_2+\delta_2 \} \cup \{-t_1-\delta_1 \leq x_3 \leq -t_1\}$ and $A_3 = \Omega \setminus (A_1 \cup A_2)$ and prove the inequality holds in all these regions. Recall that the  metric is Euclidean on $U$ which implies that both $A_1$ and $A_2$ are Euclidean. Let us first consider $A_1$. Note that in this region $ \phi_0(x_1,x_2,x_3) = x_1$ and since the metric is Euclidean in this region we deduce that $ D^2  \phi_0 (X,Y) \equiv 0$ for all $X,Y$ and hence the H\"{o}rmander condition is satisfied.\\

\noindent Let us now focus on the region denoted by $A_3$. Notice that in this region we have $ \phi_0 = F_{\lambda}(\omega(x))$. Therefore the level sets of $\phi_0(x)$ will simply be the level sets $ \{\omega(x) =c\}$. 

$$  D^2\phi_0(X,X) = \langle D_X \nabla \phi_0,X \rangle$$

\noindent Since $ |X| =|F'_{\lambda}(\omega)||\nabla \omega|$ we obtain the following estimate: 
$$  D^2\phi_0(X,X) \leq C |F_{\lambda}'(\omega)|^3$$

\noindent where it is important to note that the constant $C$ is independent of $\lambda$. Furthermore we have:

$$ D^2\phi_0(\nabla \phi_0,\nabla \phi_0) = \frac{1}{2} \nabla \phi_0 (|\nabla \phi_0|^2). $$
Since $\phi_0 = F_\lambda ( \omega(x))$:
$$  D^2\phi_0(\nabla \phi_0,\nabla \phi_0)= \frac{1}{2}( F'(\omega)^3 \nabla \omega( |\nabla \omega|^2) + 2F'(\omega)^2F''(\omega) |\nabla \omega|^4).  $$

\noindent One can easily check that for $x \in A_3$:

 \[
    |F'_{\lambda} (\omega)| \leq \left\{\begin{array}{lr}
        C \lambda e^{ \lambda (\frac{\omega-t_2}{\delta_2})^2}& \text{for } t_2+\delta_2 \leq x_3 \\
        C \lambda e^{  \lambda (\frac{\omega+t_1}{\delta_1})^2}, & \text{for } x_3 \leq -t_1-\delta_1\\
        \end{array}\right\}\\
\\
  \]

 \[
    F''_{\lambda} (\omega) \geq \left\{\begin{array}{lr}
        C \lambda^2 e^{ \lambda (\frac{\omega-t_2}{\delta_2})^2}& \text{for } t_2+\delta_2 \leq x_3 \\
        C \lambda^2 e^{  \lambda (\frac{\omega+t_1}{\delta_1})^2}, & \text{for } x_3 \leq -t_1-\delta_1\\
        \end{array}\right\}\\
\\
  \]

\noindent Thus we can easily conclude that for $\lambda$ large enough the H\"{o}rmander hypoellipticity condition is satisfied in this region. Let us now turn our attention to the  transition region $x \in A_2$. Recall that the metric $g$ is flat in $A_2$. We will actually prove the stronger claims:
\\
(1) $ D^2\phi_0(\nabla \phi_0,\nabla \phi_0) \geq 0, $
\\
(2) $ D^2\phi_0 (X,X) \geq 0 $ for all $X$ with $ \langle \nabla \phi_0, X \rangle =0.$
\\
 The idea is that near the $\{x_3=0\}$ hypersurface the convexity of  $x_3^{2k} e^{\lambda x_3^2}$ yields the H\"{o}rmander Hypo Ellipticity. Furthermore away from this surface a suitable choice of $\lambda$ large enough will yield non-negativity as well thus completing the proof. We will now make these statements more precise as follows:

 \[
    F'_{\lambda} (x) = \left\{\begin{array}{lr}
     (\frac{x_3-t_2}{\delta_2})^{2k-1}e^{\lambda(\frac{x_3-t_2}{\delta_2})^2}(\frac{2k+2\lambda(\frac{x_3-t_2}{\delta_2})^2}{\delta_2}) , & \text{for } t_2 \leq x_3 \leq t_2+\delta_2 \\
      (\frac{x_3+t_1}{\delta_1})^{2k-1}e^{\lambda(\frac{x_3+t_1}{\delta_1})^2}(\frac{2k+2\lambda(\frac{x_3+t_1}{\delta_1})^2}{\delta_1}) , & \text{for } -t_1-\delta_1 \leq x_3 \leq -t_1 \\
        \end{array}\right\}\\
\\
  \]


 $$ F''_{\lambda} (x) =  (\frac{x_3-t_2}{\delta_2})^{2k-2} e^{ \lambda(\frac{x_3-t_2}{\delta_2})^2}((\frac{(2k)(2k-1)}{\delta_2^2}+\frac{8\lambda k +2\lambda}{\delta_2^2}(\frac{x_3-t_2}{\delta_2})^2+ \frac{4\lambda^2}{\delta_2^2}(\frac{x_3-t_2}{\delta_2})^4 )$$ for  $t_2 \leq x_3 \leq t_2+\delta_2$ and: \\
 
$$ F''_{\lambda} (x) =  (\frac{x_3+t_1}{\delta_1})^{2k-2} e^{\lambda (\frac{x_3+t_1}{\delta_1})^2}((\frac{(2k)(2k-1)}{\delta_1^2}+\frac{8\lambda k +2\lambda}{\delta_1^2}(\frac{x_3+t_1}{\delta_1})^2+ \frac{4\lambda^2}{\delta_1^2}(\frac{x_3+t_1}{\delta_1})^4 )$$ for  $-t_1-\delta_1 \leq x_3 \leq -t_1$. \\


\noindent Note that:

$$  D^2\phi_0(\nabla \phi_0,\nabla \phi_0) = (\partial_3 \phi_0)^2 \partial_{33} \phi_0 + 2\partial_1 \phi_0 \partial_3 \phi_0 \partial_{13}\phi_0.$$
So:
$$D^2\phi_0(\nabla \phi_0,\nabla \phi_0) \geq |\partial_3 \phi_0|(|\partial_3\phi_0| \partial_{33}\phi_0 - 2|\chi_0 \chi_0'|).$$
$$  |\partial_3\phi_0| = |x_1 \chi_0' +F'(x_3)| \geq |F'(x_3)| - |x_1| |\chi_0'|. $$
Using the Cauchy-Schwarz inequality we see that:
 \[
    |F'_{\lambda} (x)| \geq \left\{\begin{array}{lr}
        \frac{4}{\delta_2}\sqrt{ \lambda k}(\frac{x_3-t_2}{\delta_2})^{2k} & \text{for } t_2 \leq x_3 \leq t_2+\delta_2\\
     \frac{4}{\delta_2}\sqrt{ \lambda k}(\frac{x_3+t_1}{\delta_1})^{2k} & \text{for } -t_1-\delta_1 \leq x_3 \leq -t_1\\
        \end{array}\right\}\\
\\  
\]
And:
 \[
   |x_1| |\chi_0'| \leq \left\{\begin{array}{lr}
         C(\Omega) k^2  |(\frac{x_3-t_2}{\delta_2})|^{8k-1} & \text{for } t_2 \leq x_3 \leq t_2+\delta_2 \\
      C(\Omega) k^2  |(\frac{x_3+t_1}{\delta_1})|^{8k-1} , & \text{for }  -t_1-\delta_1 \leq x_3 \leq -t_1\\
        \end{array}\right\}\\
\\  
\]
\noindent Hence we can conclude that:
 \[
  |\partial_3\phi_0|  \geq \left\{\begin{array}{lr}
     \frac{4}{\delta_2}\sqrt{ \lambda k}(\frac{x_3-t_2}{\delta_2})^{2k} -  C(\Omega) k^2  |(\frac{x_3-t_2}{\delta_2})|^{8k-1}   & \text{for }  t_2 \leq x_3 \leq t_2+\delta_2 \\
   \frac{4}{\delta_2}\sqrt{ \lambda k}(\frac{x_3+t_1}{\delta_1})^{2k}-  C(\Omega) k^2  |(\frac{x_3+t_1}{\delta_1})|^{8k-1}   , & \text{for }   -t_1-\delta_1 \leq x_3 \leq -t_1\\
        \end{array}\right\}\\
\\  
\]
and therefore for $\lambda$ sufficiently large we obtain that:
 \[
  |\partial_3\phi_0|  \geq \left\{\begin{array}{lr}
     \frac{2}{\delta_2}\sqrt{ \lambda k}(\frac{x_3-t_2}{\delta_2})^{2k}   & \text{for }  t_2 \leq x_3 \leq t_2+\delta_2 \\
   \frac{2}{\delta_2}\sqrt{ \lambda k}(\frac{x_3+t_1}{\delta_1})^{2k}   , & \text{for }   -t_1-\delta_1 \leq x_3 \leq -t_1\\
        \end{array}\right\}\\
\\  
\]

\noindent Now:
$$  \partial_{33}\phi_0= x_1 \chi'_0 + F''(x_3) \geq \frac{1}{2}F''(x_3).$$
Hence:
 \[
   \partial_{33}\phi_0 \geq \left\{\begin{array}{lr}
      \frac{2\lambda}{\delta_2^2} (\frac{x_3-t_2}{\delta_2})^{2k}, & \text{for } t_2 \leq x_3 \leq t_2+\delta_2 \\
    \frac{2\lambda}{\delta_1^2} (\frac{x_3+t_1}{\delta_1})^{2k}, & \text{for }  -t_1-\delta_1 \leq x_3 \leq -t_1\\
        \end{array}\right\}\\
\\  
\]

\noindent Hence combining the above we see that for $\lambda$ sufficiently large we have that:
 $$ D^2\phi_0(\nabla \phi_0,\nabla \phi_0) \geq 0. $$

\noindent Let us now analyze the term  $ D^2\phi_0 (X,X) $ for all $X$ with $ \langle \nabla \phi_0, X \rangle =0$
\\
Note that $d\phi_0(X)=0$ implies that:
$$ X \in \spn \{ \partial_2, \partial_3 \phi_0 \partial_1 - \partial_1 \phi_0 \partial_3 \}.$$
but since $ g$ is Euclidean in this region we have the following:
$$ D^2\phi_0 (\partial_2,X)=0.$$ 
\\
Now:
$$D^2\phi_0 (\partial_3 \phi_0 \partial_1 - \partial_1 \phi_0 \partial_3,\partial_3 \phi_0 \partial_1 - \partial_1 \phi_0 \partial_3) = (\partial_1 \phi_0)^2 \partial_{33} \phi_0- 2\partial_1 \phi_0 \partial_3 \phi_0 \partial_{13}\phi_0.$$
So:
$$D^2\phi_0 (\partial_3 \phi_0 \partial_1 - \partial_1 \phi_0 \partial_3,\partial_3 \phi_0 \partial_1 - \partial_1 \phi_0 \partial_3) =\chi_0^2 (x_1 \chi''_0+ F'') - 2 \chi_0 \chi'_0(x_1 \chi'_0 + F').$$
Using the Cauchy-Schwarz inequality again and by looking at the sign of the $x_3$ we can get the following inequalities:
$$ -2 x_1 |\chi'_0|^2 \chi_0  -2 \chi_0 \chi'_0 F'\geq 0.$$
$$ F'' + x_1 \chi''_0 \geq  \frac{F''}{2} \geq 0.$$

\noindent and thus by combining the above inequalites we obtain that:
$$D^2\phi_0 (\partial_3 \phi_0 \partial_1 - \partial_1 \phi_0 \partial_3,\partial_3 \phi_0 \partial_1 - \partial_1 \phi_0 \partial_3) \geq 0.$$

\end{proof}

\noindent We will now provide a lemma that will show that the H\"{o}rmander Hypo-Ellipticity yields a global Carleman estimate in our manifold.

\begin{lemma}
Let $(\Omega,g)$ be a compact smooth Riemannian manifold with smooth boundary and suppose $\psi \in C^{2}(\Omega)$ is such that $d\psi \neq 0$ and the H\"{o}rmander Hypo-Ellipticity condition is satisfied:
\[ D^2\psi (X,X) + D^2\psi(\nabla \psi,\nabla\psi) \geq 0\]
whenever $|X|=|\nabla\psi|$ and $\langle \nabla\psi, X \rangle =0$. Let $\mathbb{N}:=\{x \in \partial \Omega : \partial_{\nu} \psi=0\}$ and let $\mathbb{W}:=\{v \in C^2(\Omega) : v|_{\partial \Omega}=0 , \partial_{\nu}v|_{\partial \Omega \setminus \mathbb{N}}=0 \}$.
Then there exists $C$ depending only on the domain and $h_0>0$ such that for all $v \in  \mathbb{W}$ and all $ 0<h<h_0$ the following estimate holds:
\[ \| e^{\frac{\psi}{h}} \triangle_g (e^{-\frac{\psi}{h}} v) \|_{L^2(\Omega)} \ge \frac{C}{h} \|v\|_{L^2(\Omega)} + C \|Dv\|_{L^2(\Omega)}  \]
\end{lemma}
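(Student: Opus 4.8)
The plan is to establish the Carleman estimate via the standard conjugation-and-integration-by-parts scheme, where the Hörmander hypoellipticity hypothesis furnishes exactly the positivity needed to absorb the critical commutator term. First I would conjugate the Laplacian: writing $P_h := e^{\psi/h}(h^2\triangle_g)(e^{-\psi/h}\,\cdot\,)$ and rescaling, the operator splits into a formally self-adjoint part $A$ and a formally skew-adjoint part $iB$, where $A = h^2\triangle_g + |\nabla\psi|^2$ and $B$ is the first-order transport operator $B = -i(2\langle\nabla\psi,\nabla\cdot\rangle + (\triangle_g\psi)\cdot)$, up to the usual factors of $h$. Expanding $\|P_h v\|_{L^2}^2 = \|Av\|^2 + \|Bv\|^2 + \langle i[A,B]v,v\rangle$, the real gain comes from the commutator $i[A,B]$, whose principal symbol on the characteristic set is governed precisely by the Hessian quantity $D^2\psi(\xi,\xi) + D^2\psi(\nabla\psi,\nabla\psi)$ restricted to $\langle\nabla\psi,\xi\rangle=0$, $|\xi|=|\nabla\psi|$. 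This is where the hypothesis of the lemma enters: the Hörmander condition guarantees this symbol is nonnegative, and a standard sharp Gårding or direct sub-ellipticity argument then yields a positive lower bound of the form $\langle i[A,B]v,v\rangle \gtrsim h(\|h\nabla v\|^2 + \|v\|^2)$ modulo controllable errors.

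The crucial point that distinguishes this from the closed-manifold or whole-boundary case is the treatment of the \emph{boundary terms} generated by integration by parts, and this is where the precise definition of the space $\mathbb{W}$ does its work. When expanding $\langle Av, Bv\rangle$ and its conjugate to produce the commutator, each integration by parts over $\Omega$ contributes boundary integrals over $\partial\Omega$ involving $v|_{\partial\Omega}$, $\partial_\nu v|_{\partial\Omega}$, and $\partial_\nu\psi$. I would argue that the condition $v|_{\partial\Omega}=0$ kills all terms containing an undifferentiated trace, while the remaining boundary contributions carry a factor of $\partial_\nu v$ that must be controlled. On the set $\mathbb{N} = \{\partial_\nu\psi=0\}$ the weight $\psi$ has vanishing normal derivative, so the corresponding boundary integrand vanishes there; on the complement $\partial\Omega\setminus\mathbb{N}$ the requirement $\partial_\nu v=0$ built into $\mathbb{W}$ eliminates precisely those terms. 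The upshot is that for $v\in\mathbb{W}$ all boundary contributions either vanish or have a favorable (here, non-positive or controllable) sign, so no uncontrolled surface energy obstructs the interior estimate.

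After disposing of the boundary terms, I would combine the interior commutator positivity with the trivial lower bounds $\|Av\|^2\ge 0$ and $\|Bv\|^2\ge 0$ to obtain, for $h$ below some threshold $h_0$, an inequality of the shape $\|P_h v\|_{L^2}^2 \ge C h(\|v\|_{L^2}^2 + \|h\nabla v\|_{L^2}^2)$, absorbing the lower-order remainder terms (those coming from $\triangle_g\psi$, the curvature of $g$, and the $O(1)$ errors in the symbol calculus) into the leading positive term by taking $h$ small. Undoing the rescaling by dividing through by $h^2$ and taking square roots converts this into $\|e^{\psi/h}\triangle_g(e^{-\psi/h}v)\|_{L^2} \ge \frac{C}{h}\|v\|_{L^2} + C\|Dv\|_{L^2}$, which is the claimed estimate. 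The main obstacle I anticipate is not the interior positivity—that is handed to us by Lemma \ref{hormander} through the Hörmander condition—but rather the careful bookkeeping of the boundary terms together with verifying that the sign conditions genuinely hold on all of $\partial\Omega\setminus\mathbb{N}$; one must check that the geometry of $\psi$ (built from $\omega$ and the Fermi coordinates so that $\partial_\nu\psi$ is controlled near $S_u$, $S_l$, and $\Gamma$) makes the space $\mathbb{W}$ rich enough to contain the test functions later used, while still forcing every surface integral into a sign-definite or negligible form.
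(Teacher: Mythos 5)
Your overall skeleton (conjugate, split into symmetric and antisymmetric parts, expand $\|P_h v\|^2 = \|Av\|^2 + \|Bv\|^2 + \langle [A,B]v,v\rangle$, with the definition of $\mathbb{W}$ killing the boundary terms from the integrations by parts) matches the paper, and your account of the role of $\mathbb{W}$ is consistent with the paper's brief treatment of that point. But there is a genuine gap at the analytic core of your argument: you assign the entire positive gain to the commutator, claiming that nonnegativity of its symbol plus a sharp G\aa rding or subellipticity argument yields $\langle i[A,B]v,v\rangle \gtrsim h\left(\|h\nabla v\|^2 + \|v\|^2\right)$. That step fails. The hypothesis is only $D^2\psi(X,X) + D^2\psi(\nabla\psi,\nabla\psi)\geq 0$, not $>0$; the paper's remark following the lemma stresses exactly this trichotomy --- the strict case can be handled as in \cite{EZ} with a stronger gain, the identically-zero case as in \cite{DKSU} or \cite{S}, and the present lemma sits in the intermediate, merely semidefinite case. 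Concretely, for the weight this lemma is applied to one has $\phi_0 = x_1$ on the Euclidean region $A_1$, where the relevant Hessian quantity vanishes identically: the commutator is that of a limiting Carleman weight on an open set, so no G\aa rding-type argument can extract a strictly positive multiple of $h\|v\|^2$ from it. Sharp G\aa rding gives at best nonnegativity modulo a controllable error, which is all the paper ever extracts from the Hörmander condition.

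What your outline is missing are the two mechanisms that actually produce the right-hand side of the estimate. First, the paper performs the conformal renormalization $\hat{g} = |\nabla^g\psi|_g^2\, g$, which normalizes $|\nabla\psi|_{\hat{g}}=1$ so that $B = h\left(2\partial_t + \triangle_{\hat{g}}\psi\right)$ becomes pure transport along the gradient flow in level-set coordinates; since $d\psi\neq 0$ the level sets are non-trapping, and an integrating factor together with the Poincar\'e inequality gives $\|Bv\|_{L^2} \geq Ch\|v\|_{L^2}$ --- this, not the commutator, is the source of the $\frac{C}{h}\|v\|$ term. Second, the gradient term comes from pairing $Av$ against $\delta h^2 v$ for small fixed $\delta$, yielding $\|Av\|^2 \geq C\delta\left(h^4\|\nabla v\|^2 - h^2\|v\|^2\right)$. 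The Hörmander condition enters only once: in level-set coordinates $([A,B]v,v) = -2h^3\int \partial_t\hat{g}^{\alpha\beta}\,\partial_\alpha v\,\partial_\beta v + h^3\int K|v|^2$, and $-\partial_t\hat{g}^{\alpha\beta}$ is identified, via the conformal transformation law of the second fundamental form of the level sets, with a positive semidefinite form precisely because of the hypothesis --- so the commutator contributes only an error $\geq -Ch^3\|v\|^2$, which is absorbed by $\|Bv\|^2$. Without the renormalization, the non-trapping Poincar\'e bound on $B$, and the $Av$-pairing trick, your argument cannot close, since the quantity you rely on for positivity is identically zero on part of the manifold.
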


\begin{remark}
The above estimate is called a {\bf Carleman estimate} in $(\Omega,g)$. The corresponding phase function $\psi$ is called a {\bf Carleman weight}. In general there is a rather standard technique of proving these estimates either through integration by parts or semiclassical calculus. We will employ the former method due to simplicity. In cases where \[ D^2\psi (X,X) + D^2\psi(\nabla \psi,\nabla\psi) >0 \]
whenever $|X|=|\nabla\psi|$ and $\langle \nabla\psi, X \rangle =0$ one can refer to \cite{EZ} for proving this estimate  where in fact we would get a stronger gain in terms of $h$. Similarly in the case where \[ D^2\psi (X,X) + D^2\psi(\nabla \psi,\nabla\psi) =0 \]
whenever $|X|=|\nabla\psi|$ and $\langle \nabla\psi, X \rangle =0$ one can refer to \cite{DKSU} or \cite{S} for a proof. In our setting we are in an intermediate case and thus require to adjust the arguments. \\
\end{remark}

\begin{proof}

It suffices to prove the claim for the renormalized metric $ \hat{g}=|\nabla^g \psi|_g^2 g $. To see this let us assume that $ c = |\nabla^g \psi|_g^{-2}$ and that $\psi$ is a Carleman weight with respect to $\hat{g}$. But then using the transformation property of 
Laplace Beltrami operator under conformal changes of metric we deduce that: 
\[ e^{\frac{\psi}{h}} (-h^2 \triangle_g) (e^{-\frac{\psi}{h}} v) =  e^{\frac{\psi}{h}} (-h^2 c^{-\frac{5}{4}} \triangle_{\hat{g}}) (c^{\frac{1}{4}}e^{-\frac{\psi}{h}} v) - h^2 q_c c^{-1} v\]
where:
\[q_c = c^{\frac{1}{4}}\triangle_{c\hat{g}} c^{-\frac{1}{4}}\]
Now note that $c(x)>0 $ for all $x \in \Omega$ and $ \|q_c\|_{L^{\infty}} < \infty$. Therefore :
\[ \| e^{\frac{\psi}{h}} (-h^2 \triangle_g) (e^{-\frac{\psi}{h}} v)\|_{L^2(g)} \gtrapprox h \|v\|_{L^2} +h^2\|Dv\|_{L^2} - h^2 \|q_c c^{-1}\|_{L^{\infty}} \|v\|_{L^2}\]
The claim will clearly follow for $h$ small enough.\\
\\

\noindent Let $P_{\psi} :=  e^{\frac{\psi}{h}} (-h^2\triangle_{\hat{g}}) e^{-\frac{\psi}{h}} = A +  B$ where $A$ and $B$ are the formally symmetric and anti-symmetric operators ( in $L^2(\Omega_1, \hat{g})$):\\

\[A= -h^2 \triangle_{\hat{g}} - 1\] 
\[B= h (2 \langle d\psi,d\cdot\rangle_{\hat{g}} + \triangle_{\hat{g}} \psi)\]\\

\noindent Hence:
\[\|P_{\psi}v\|^2_{L^2(\hat{g})}= \|Av\|^2_{L^2(\hat{g})}+\|Bv\|^2_{L^2(\hat{g})}+ ([A,B]v,v)_{L^2(\hat{g})}\]\\

\noindent Note that the key reason on why there will be {\bf no boundary terms} in the above expression is the assumption that $ v \in \mathbb{W}$. Now note that:
\[ [A,B] = -2h^3 [\triangle_{\hat{g}}, \langle d\psi,d\cdot\rangle_{\hat{g}}] + h^3 X\] 
where $X$ is a smooth vector field.

\noindent Let us define the coordinate system $(t,y_1,y_2)$ as follows:
Define the normal vector field to the level sets of $\psi$ and let the integral curves correspond to the coordinate $t$ choosing $t=0$ on one of these level sets. Furthermore let us consider smooth maps $G_t$ to be smooth diffeomorphisms from the unit disk to the corresponding level set $\psi_t$ smoothly depending on $t$. Note that in our coordinate system the pull back of the metric takes the following form : $$g= dt\otimes dt+ g_{\alpha \beta}(t,y) dy^{\alpha}\otimes dy^{\beta}$$

\noindent Thus:
$$([A,B]v,v)_{L^2(\hat{g})} = -2h^3 \int \partial_t \hat{g}^{\alpha\beta} \partial_{\alpha} v \partial_{\beta} v + h^3 \int K(x) |v|^2  $$

\noindent Here, $K$ deontes a continuous function on $\Omega$. We now note that  $-\partial_t \hat{g}^{\alpha\beta}$ denotes the inverse of the second fundamental form of the level sets of $\psi$ with respect to the renormalized metric. Recall that if  $ \Gamma^{n-1} \subset M^{n}$ is an embedded nondegenerate hypersurface in $M$, then the second funamental form $h(X,Y)$ on $\Gamma$ changes under conformal rescalings $\hat{g}=cg$ as follows:
\[   \hat{h}(X,X) = \sqrt{c}(h(X,X) + \frac{1}{2}\frac{\nabla_N c}{c} g(X,X))\] 
Hence:
\[ \hat{h}(X,X)=\sqrt{c}(D^2\psi(X,X) + D^2\psi(\nabla \psi,\nabla \psi) \frac{|X|^2}{|\nabla\psi|^2}) \]
Thus using the main assumption of the Lemma, we see that  $-\partial_t \hat{g}^{\alpha\beta}$ is positive semi-definite and thus we can conclude that:
\[\|P_{\psi}v\|^2_{L^2(\hat{g})} \geq \|Av\|^2_{L^2(\hat{g})}+ \|Bv\|^2_{L^2(\hat{g})}+ ([A,B]v,v)_{L^2(\hat{g})} \]
So:
\[\|P_{\psi}v\|^2_{L^2(\hat{g})}\geq \|Av\|^2_{L^2(\hat{g})}+  \|Bv\|^2_{L^2(\hat{g})} + h^3 \int K(x) |v|^2  \hspace{2mm} (*)\]
Note that:
\[Bv= h (2 \langle d\psi,dv\rangle_{\hat{g}} + (\triangle_{\hat{g}} \psi)v) = h ( 2\partial_t v + (\triangle_{\hat{g}} \psi) v)\]
The Poincare inequality implies that:
\[  \| \partial_t v\|_{L^2(\Omega,\hat{g})} \geq C \| v\|_{L^2(\Omega,\hat{g})} \hspace{1cm}  \forall v \in H^1_0 (\Omega)\]\\
Recall that the level sets of $\psi$ are non-trapping since $d\psi \neq 0$ anywhere. Since we are working over a compact manifold we can use an integrating factor and use the Poincare inequality above to conclude that:
\[ \|Bv\|_{L^2(\Omega,\hat{g})} \geq C h \| v\|_{L^2(\Omega,\hat{g})} \hspace{1cm}  \forall v \in C^{\infty}_c(\Omega)\hspace{1cm} (**) \]
Let us also observe that by integrating $Av$ against $\delta h^2v$ for some small $\delta$ independent of $h$ we obtain the following estimate:
\[ \|Av\|^2_{L^2(\hat{g})} \geq  C \delta ( h^4 \int |\nabla v|^2 - h^2 \int v^2) \hspace{1cm} (***)\] 

\noindent Combining (*),(**) and (***) yields the claim.

\end{proof}

\noindent Combining the previous two lemmas yields the following:

\begin{corollary}
Let $ \phi_0(x_1,x_2,x_3) = x_1 \chi_0(x_3) +(F_{\lambda}\circ \omega)(x)$ as defined in the previous lemma with  $k \geq 1$ arbitrary and $\lambda$ sufficiently large and only depending on the domain and k. Then $\phi_0(x_1,x_2,x_3) $ is a Carleman weight in $\Omega$, that is to say there exists $h_0>0$ and $C$ depending on the domain $(\Omega,g)$ such that the following estimate holds:

\[ \| e^{\frac{ \phi_0}{h}} \triangle_g (e^{-\frac{ \phi_0}{h}} v) \|_{L^2(\Omega)} \ge \frac{C}{h} \|v\|_{L^2(\Omega)} + C  \|Dv\|_{L^2(\Omega)}  \]
$\forall h \le h_0$ and $ v \in \mathbb{D}$.\\
\\

\end{corollary}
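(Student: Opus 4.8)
The plan is to apply the preceding Carleman lemma directly with the weight $\psi = \phi_0$, so that the work consists only of verifying its three structural hypotheses for $\phi_0$ and then identifying the admissible class $\mathbb{W}$ attached to $\phi_0$ with the class $\mathbb{D}$ appearing in the statement. First I would check the structural requirements. The regularity $\phi_0 \in C^2(\Omega)$ follows from the fact that $\chi_0$ and $F_\lambda$ were constructed as $C^{k-1}$ functions, so any $k \geq 3$ suffices (and in any case one is free to take $k$ as large as desired). The condition $d\phi_0 \neq 0$ everywhere I would check region by region: on $A_1$ one has $\phi_0 = x_1$ so $d\phi_0 = dx_1 \neq 0$; off $A_1$ the cutoff $\chi_0$ vanishes away from the flat slab so that $\nabla\phi_0 = F'_\lambda(\omega)\nabla\omega$, and this is nonzero because $d\omega \neq 0$ everywhere by Lemma~\ref{nontrapping} and $F'_\lambda$ does not vanish once its argument leaves $(-t_1,t_2)$. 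Finally, the Hörmander hypo-ellipticity inequality $D^2\phi_0(X,X) + D^2\phi_0(\nabla\phi_0,\nabla\phi_0)\geq 0$ for $|X| = |\nabla\phi_0|$, $\langle\nabla\phi_0, X\rangle = 0$ is exactly the conclusion of Lemma~\ref{hormander}, valid for $\lambda$ large depending only on $\Omega$, $k$, and $\|g_{ij}\|_{C^2}$.

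The crucial step, which I expect to be the main point rather than a routine check, is to show $\mathbb{D} \subseteq \mathbb{W}$, where $\mathbb{W} = \{v \in C^2(\Omega) : v|_{\partial\Omega} = 0,\ \partial_\nu v|_{\partial\Omega\setminus\mathbb{N}} = 0\}$ and $\mathbb{N} = \{x\in\partial\Omega : \partial_\nu\phi_0 = 0\}$. Since $v \in \mathbb{D}$ already satisfies $v|_{\partial\Omega} = 0$ and $\partial_\nu v|_\Gamma = 0$, the inclusion reduces to the geometric claim $\partial\Omega\setminus\Gamma \subseteq \mathbb{N}$, i.e.\ $\partial_\nu\phi_0 \equiv 0$ on $\partial\Omega\setminus\Gamma$. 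Because $\overline{U}\cap\partial\Omega = \Gamma$, every boundary point off $\Gamma$ lies in $\partial\Omega\setminus\overline{U} = S_u \cup S_l$, and there $\chi_0$ vanishes, so $\phi_0 = F_\lambda(\omega)$ and $\partial_\nu\phi_0 = F'_\lambda(\omega)\,\partial_\nu\omega$. The heart of the matter is that $\omega$ was built to equal the tangential Fermi coordinate $z_1$ near $S_l$ and $s_1$ near $S_u$; in Fermi coordinates the outward normal is $\partial_{z_3}$ (resp.\ $\partial_{s_3}$), while $z_1$ (resp.\ $s_1$) is a coordinate along the surface, so $\partial_\nu\omega = 0$ there. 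Hence $\partial_\nu\phi_0 = 0$ on $S_u\cup S_l \supseteq \partial\Omega\setminus\Gamma$, which gives $\partial\Omega\setminus\Gamma \subseteq \mathbb{N}$, equivalently $\partial\Omega\setminus\mathbb{N}\subseteq\Gamma$, and therefore $\mathbb{D}\subseteq\mathbb{W}$.

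With all hypotheses in place and the inclusion $\mathbb{D}\subseteq\mathbb{W}$ established, the Carleman lemma applies to every $v\in\mathbb{D}$ and yields the stated estimate. The only bookkeeping point worth flagging is the dependence of constants: the hypo-ellipticity of Lemma~\ref{hormander} first fixes $\lambda$ in terms of $(\Omega, k, \|g_{ij}\|_{C^2})$, after which the Carleman lemma's constant depends only on the resulting fixed weight $\phi_0$ and the domain, so the final $h_0$ and $C$ are genuine domain constants as claimed.
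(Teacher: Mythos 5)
Your proposal is correct and matches the paper's (implicit) argument: the corollary is stated there with only the remark that it follows by ``combining the previous two lemmas,'' and your verification --- Lemma~\ref{hormander} supplying the H\"ormander condition for $\lambda$ large, the region-by-region check that $d\phi_0 \neq 0$, and the key inclusion $\partial\Omega\setminus\Gamma \subseteq \mathbb{N}$ (hence $\mathbb{D}\subseteq\mathbb{W}$) via $\omega \equiv s_1$, $z_1$ being tangential Fermi coordinates near $S_u$, $S_l$ --- is precisely the intended combination, spelled out in more detail than the paper gives. Your observation that $k\geq 3$ is needed for $\phi_0 \in C^2$ is a fair and harmless sharpening of the paper's ``$k \geq 1$ arbitrary.''
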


\section{Complex Geometric Optics}

\noindent In this section, we will utilize the above corollary to construct a family of complex geometric optic solutions (CGO) to the Schr\"{o}dinger equation $(-\triangle_g + q)u=0$ concentrating on the plane $\Pi$. Thes families of solutions can then be used to deduce uniqueness of the potential from the local Dirichlet to Neumann map $C_q^{\Gamma,\Gamma}$. We will closely follow the ideas in \cite{GU} and \cite{NS} .  \\

\begin{definition}
$$ P_{\tau} v := e^{-\tau \phi_0} (\triangle_g - q_*) ( e^{\tau \phi_0}v) $$
\end{definition}

\begin{definition}
$\pi_{\tau}:L^2(\Omega) \to L^2(\Omega)$ denotes the orthogonal projection onto:
$$\{ v \in L^2(\Omega): P_{\tau}v=0, v|_{\partial \Omega \setminus \Gamma}=0\}$$
\end{definition}

\begin{lemma}
\label{construction}
Let $ f \in L^2(\Omega,g)$. For all $\tau>0$ sufficiently large, there exists a unique function $r:=H_{\tau}f \in H_{\triangle}(\Omega)$ such that:
\begin{itemize}
\item{$P_{\tau}r =f$   }

\item{ $r|_{\partial \Omega \setminus \Gamma}=0$  }

\item{ $\pi_{\tau} r = 0$}

\end{itemize}

\noindent Furthermore $r$ satisfies the estimate:
\[   \|r\|_{L^2(\Omega)} \leq C \tau^{-1} \|f\|_{L^2(\Omega)}\]\\
where the constant $C$ only depends on $(\Omega,g)$ and $\|q_*\|_{L^{\infty}(\Omega)}$.
\end{lemma}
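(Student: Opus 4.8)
The plan is to produce $r$ by a duality argument driven by the Carleman estimate of the preceding corollary, following the CGO constructions of \cite{GU} and \cite{NS}. Set $h=\tau^{-1}$ and write the formal adjoint of $P_\tau$ with respect to the $L^2(\Omega,g)$ pairing as
\[ P_\tau^* w = e^{\tau\phi_0}(\triangle_g - q_*)(e^{-\tau\phi_0}w). \]
The point is that $P_\tau^* w = e^{\phi_0/h}\triangle_g(e^{-\phi_0/h}w) - q_* w$, so the corollary applies verbatim to the leading term. Hence for $w\in\mathbb{D}$, and for $\tau$ large enough (depending on $\|q_*\|_{L^\infty}$) to absorb the zeroth order contribution $q_* w$ into the gain $\tfrac{C}{h}\|w\|_{L^2}=C\tau\|w\|_{L^2}$, there is a constant $c>0$ with
\[ \|P_\tau^* w\|_{L^2(\Omega)} \ge c\,\tau\,\|w\|_{L^2(\Omega)} + c\,\|Dw\|_{L^2(\Omega)}, \qquad w\in\mathbb{D}. \]
In particular $P_\tau^*$ is injective on $\mathbb{D}$.

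Next I would run the Hahn--Banach/Riesz scheme. On the subspace $\{P_\tau^* w : w\in\mathbb{D}\}\subset L^2(\Omega)$ define the functional $\ell(P_\tau^* w):=(w,f)_{L^2}$. The estimate above makes $\ell$ well defined (if $P_\tau^* w=0$ then $w=0$) and bounded, since $|\ell(P_\tau^* w)|\le\|f\|_{L^2}\|w\|_{L^2}\le (c\tau)^{-1}\|f\|_{L^2}\,\|P_\tau^* w\|_{L^2}$. Extending $\ell$ to $L^2(\Omega)$ without increasing its norm and representing it via Riesz, I obtain $r_0\in L^2(\Omega)$ with $\|r_0\|_{L^2}\le (c\tau)^{-1}\|f\|_{L^2}$ and
\[ (P_\tau^* w, r_0)_{L^2} = (w,f)_{L^2}, \qquad w\in\mathbb{D}. \]
Testing against $w\in C_c^\infty(\Omega)\subset\mathbb{D}$ yields $P_\tau r_0 = f$ in $\mathcal{D}'(\Omega)$; as $f\in L^2$ and $\phi_0$ is smooth, this forces $\triangle_g(e^{\tau\phi_0}r_0)\in L^2(\Omega)$, so $r_0\in H_\triangle(\Omega)$.

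It remains to read off the boundary condition and to remove the kernel component. Since $r_0\in H_\triangle(\Omega)$, Green's identity holds in the duality sense between $H_\triangle$ and the boundary spaces $H^{-1/2},H^{-3/2}$. Writing $\tilde r=e^{\tau\phi_0}r_0$ and $\tilde w=e^{-\tau\phi_0}w$, the interior pairings differ by $\int_{\partial\Omega}(\tilde r\,\partial_\nu\tilde w-\tilde w\,\partial_\nu\tilde r)\,dS$; because $w|_{\partial\Omega}=0$ gives $\tilde w|_{\partial\Omega}=0$, and $\partial_\nu w|_\Gamma=0$, this collapses (after cancelling the smooth exponential factors on the boundary) to the single pairing $\int_{\partial\Omega\setminus\Gamma} r_0\,\partial_\nu w\,dS$. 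Since the interior terms already agree, this boundary pairing vanishes for every $w\in\mathbb{D}$, and as $\partial_\nu w|_{\partial\Omega\setminus\Gamma}$ is otherwise unconstrained we conclude $r_0|_{\partial\Omega\setminus\Gamma}=0$. Finally set $r:=r_0-\pi_\tau r_0$. The subtracted term lies in the kernel space, so $P_\tau r=f$ and $r|_{\partial\Omega\setminus\Gamma}=0$ persist; idempotence of $\pi_\tau$ gives $\pi_\tau r=0$; and $\|r\|_{L^2}\le\|r_0\|_{L^2}\le C\tau^{-1}\|f\|_{L^2}$. Uniqueness is immediate: if $r,r'$ both satisfy the three conditions, then $r-r'$ lies in the kernel space yet satisfies $\pi_\tau(r-r')=0$, hence is orthogonal to that space and therefore vanishes.

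The main obstacle is the rigorous handling of the boundary terms at low regularity: $r_0$ lies only in $H_\triangle(\Omega)$, so its trace and normal derivative live in $H^{-1/2}$ and $H^{-3/2}$, and one must justify Green's identity in this weak sense and verify that $\partial_\nu w|_{\partial\Omega\setminus\Gamma}$, as $w$ ranges over $\mathbb{D}$, sweeps out a set dense enough to force $r_0|_{\partial\Omega\setminus\Gamma}=0$. The remaining ingredients---absorbing $q_*$, the Hahn--Banach extension, and the orthogonal projection---are routine once the Carleman estimate is available.
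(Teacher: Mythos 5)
Your proposal is correct and takes essentially the same route as the paper: the Carleman estimate applied to $P_\tau^* w = e^{\tau\phi_0}(\triangle_g-q_*)(e^{-\tau\phi_0}w)$ on $\mathbb{D}$ (with $q_*$ absorbed for $\tau$ large), the Hahn--Banach/Riesz duality scheme producing $r_0$ with the $C\tau^{-1}$ bound, Green's identity in the $H_\triangle$ duality sense against $v\in\mathbb{D}$ to force $r_0|_{\partial\Omega\setminus\Gamma}=0$, and orthogonality to the kernel space for uniqueness. The only cosmetic difference is that you enforce $\pi_\tau r=0$ a posteriori by subtracting $\pi_\tau r_0$, whereas the paper obtains it directly by extending the functional by zero on the orthogonal complement of $P_\tau^*\mathbb{D}$ (which contains the kernel space), so its Riesz representative already satisfies $(1-\pi_\tau)r=r$; both variants are valid.
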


\begin{remark}
This is a rather standard proof about deducing surjectivity for some operator $T$ from the knowledge of injectivity and closed range for the adjoint operator $T^*$. We will closely follow the proofs provided in \cite{S} and \cite{NS}  here. \\
\end{remark}

\begin{proof}

Let us first prove uniqueness. Indeed suppose that $r_1,r_2$ are two solutions. Then $P_{\tau}(r_1-r_2)=0$ and $(r_1-r_2)|_{\partial \Omega \setminus \Gamma}=0$ so we have $\pi_{\tau}(r_1-r_2)=(r_1-r_2)$. However the last condtion in the lemma implies that $\pi_{\tau}(r_1-r_2)=0$ so $r_1 \equiv r_2$.
To show existence define $\mathbb{B}= P_{\tau}^* \mathbb{D}$ as a subspace of $L^2(\Omega)$ (recall Definition ~\ref{D}) . Consider the linear functional $L: \mathbb{B} \to \mathbb{C}$ through:
\[ L(P^*_{\tau} v) = \langle v,f \rangle \hspace{1cm} \forall v \in \mathbb{D}\]
This is well-defined since any element of $\mathbb{B}$ has a unique representation as $P^*_{\tau} v$  with $ v \in \mathbb{D}$ by the Carleman estimate. Also using the Cauchy-Schwarz inequality and the Carleman estimate we have:
\[ |  L(P^*_{\tau} v)| \leq \|v\|_{L^2} \|f\|_{L^2} \leq C \tau^{-1} \|f\|_{L^2} \|P^*_{\tau} v\|_{L^2}\]
for $\tau$ large enough with $C$ depending only on $(\Omega,g)$. Thus $L$ is a bounded linear operator on $\mathbb{B}$. Extend $L$ by continuity to the closure $\bar{\mathbb{B}}$. Set $L \equiv 0$ on the orthogonal complement in $L^2(\Omega)$ of $\mathbb{B}$. Thus we obtain a bounded linear operator $\hat{L}: L^2(\Omega) \to \mathbb{C}$ with $ \hat{L} |_{\mathbb{D}} = L$. Furthermore:
\[ \| \hat{L}\| \leq C \tau^{-1} \|f\|_{L^2}\]
Now by the Riesz representation therorem we deduce that there exists a unique $r \in  L^2(\Omega)$ such that $\hat{L}(w) = \langle w,r\rangle \hspace{5mm}\forall w \in L^2(\Omega) $ and $(1-\pi_{\tau})r=r$. we also have  $\| r\|_{L^2} \leq C \tau^{-1} \|f\|_{L^2}$. Note that for $w \in C^{\infty}_c(\Omega)$ we have:\\
 \[ \langle v,P_{\tau}r \rangle =   \langle P^*_{\tau}v,r\rangle = \hat{L}(P^*_{\tau}v)= L(P^*_{\tau}v)=\langle v,f \rangle \]
Hence $ P_{\tau}r=f$ in the weak sense. To show that $r|_{\partial \Omega \setminus \Gamma}=0$ we note that for any $ v \in \mathbb{D}$:
$$\langle  P_{-\tau}v,r \rangle = \langle v ,f  \rangle$$
Using the Green's identity we know that:
$$\langle   P_{-\tau}v,r \rangle = \langle  v,P_{\tau} r \rangle + \int_{\partial \Omega \setminus \Gamma} (\partial_{\nu} v)r $$ 
Combining these we get the result.
\end{proof}

\noindent With the proof of Lemma ~\ref{construction} now complete, one can proceed with construction of the CGO solutions as follows. 
\noindent Let us define the function $\Phi:\Omega \to \mathbb{C}$ through $\Phi = \phi_0 + i \tilde{\omega}$. We also define $v_0:U \to \mathbb{R}$ through $v_0 = h(x_1+ix_2) \chi(x_3)$ where $h$ is an arbitrary holomorphic function in $z:=x_1+ix_2$ and $\chi$ is an arbitrary function of compact support in the set $V$. Note that in the region $V$ we have the following equations (recall that the metric $g$ is Euclidean in this region):
$$ \langle d\Phi,d\Phi\rangle_g =0$$
$$ 2\langle d\Phi,dv_0 \rangle_g + (\triangle_g \Phi) v_0 =0$$

\noindent Subsequently, we have the following two Lemmas:

\begin{lemma}
\label{harmonic1}
For $\tau>0$ sufficiently large, there exists solutions $u_0$ of $(-\triangle_g +q_*) u_0=0$ of the form $ u_0 = e^{\tau \Phi} (v_0 + r_0)$ where $r_0|_{\partial \Omega \setminus \Gamma}=0$ and $\|r_0\|_{L^2(\Omega)} \leq \frac{C}{\tau}$. Here $C$ is a constant that depends on the domain $(\Omega,g)$ and $\|q_*\|_{L^{\infty}(\Omega)}$.
\end{lemma}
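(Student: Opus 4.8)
The plan is to reduce the construction to the solvability result of Lemma~\ref{construction} by recognizing that the complex phase $\Phi = \phi_0 + i\tilde{\omega}$ differs from the real Carleman weight $\phi_0$ (the one built into $P_\tau$) only by the purely imaginary term $i\tilde{\omega}$, which can be absorbed into the amplitude. Concretely, I would write $u_0 = e^{\tau\Phi}(v_0+r_0) = e^{\tau\phi_0}\,[\,e^{i\tau\tilde{\omega}}(v_0+r_0)\,]$ and set $\tilde{v}_0 := e^{i\tau\tilde{\omega}}v_0$ and $\tilde{r}_0 := e^{i\tau\tilde{\omega}}r_0$. Since $(\triangle_g - q_*)(e^{\tau\phi_0}w) = e^{\tau\phi_0}P_\tau w$ by the definition of $P_\tau$, the equation $(-\triangle_g + q_*)u_0 = 0$ is equivalent to $P_\tau(\tilde{v}_0 + \tilde{r}_0)=0$, that is, to $P_\tau \tilde{r}_0 = -P_\tau\tilde{v}_0$.

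The heart of the argument is the computation of the source term $P_\tau\tilde{v}_0$. Because $e^{\tau\phi_0}e^{i\tau\tilde{\omega}} = e^{\tau\Phi}$, we have $P_\tau\tilde{v}_0 = e^{-\tau\Phi}(\triangle_g - q_*)(e^{\tau\Phi}v_0)$, and expanding the conjugated Laplacian yields
\[ P_\tau\tilde{v}_0 = \triangle_g v_0 + 2\tau\langle d\Phi, dv_0\rangle_g + \tau(\triangle_g\Phi)v_0 + \tau^2\langle d\Phi, d\Phi\rangle_g v_0 - q_* v_0.\]
Now $v_0$ is supported in $V$, where the metric is Euclidean and where the eikonal equation $\langle d\Phi,d\Phi\rangle_g = 0$ and the transport equation $2\langle d\Phi, dv_0\rangle_g + (\triangle_g\Phi)v_0 = 0$ both hold; these annihilate the $\tau^2$ and $\tau^1$ terms respectively, leaving $P_\tau\tilde{v}_0 = (\triangle_g - q_*)v_0$, a fixed $L^2$ function independent of $\tau$ and supported in $V$. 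This is the key point: the null phase and the transport equation are exactly what force the source to be bounded uniformly in $\tau$.

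With the source term controlled, I would invoke Lemma~\ref{construction}: set $\tilde{r}_0 := -H_\tau(P_\tau\tilde{v}_0)$, so that $P_\tau\tilde{r}_0 = -P_\tau\tilde{v}_0$ and hence $P_\tau(\tilde{v}_0 + \tilde{r}_0)=0$, giving $(-\triangle_g+q_*)u_0=0$. Lemma~\ref{construction} also supplies the boundary condition $\tilde{r}_0|_{\partial\Omega\setminus\Gamma}=0$ together with the bound $\|\tilde{r}_0\|_{L^2}\le C\tau^{-1}\|P_\tau\tilde{v}_0\|_{L^2} = C\tau^{-1}\|(\triangle_g-q_*)v_0\|_{L^2}$. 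Setting $r_0 := e^{-i\tau\tilde{\omega}}\tilde{r}_0$, so that indeed $u_0 = e^{\tau\Phi}(v_0+r_0)$, and noting that multiplication by the nonvanishing smooth factor $e^{-i\tau\tilde{\omega}}$ preserves both the $L^2$ norm and the vanishing trace on $\partial\Omega\setminus\Gamma$, I obtain $r_0|_{\partial\Omega\setminus\Gamma}=0$ and $\|r_0\|_{L^2}\le C/\tau$, with $C$ depending on $(\Omega,g)$, on $\|q_*\|_{L^\infty(\Omega)}$, and (through $v_0$) on the chosen $h$ and $\chi$. Finally $u_0\in H_{\triangle}(\Omega)$ follows automatically, since $\triangle_g u_0 = q_* u_0 \in L^2(\Omega)$.

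I expect the only genuinely delicate step to be the bookkeeping in the second paragraph: verifying that the eikonal and transport identities valid on the Euclidean region $V$ really do eliminate the $\tau^2$ and $\tau^1$ contributions on the entire support of $v_0$, and that $P_\tau\tilde{v}_0$ vanishes identically outside $V$ (which it does, since $v_0\equiv 0$ there forces $(\triangle_g-q_*)(e^{\tau\Phi}v_0)\equiv 0$ there). Everything else is a direct application of the solvability estimate already established in Lemma~\ref{construction}.
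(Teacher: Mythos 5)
Your proposal is correct and follows essentially the same route as the paper: absorb the purely imaginary part of $\Phi$ into the amplitude, use the eikonal and transport identities on the Euclidean region $V$ (where $v_0$ is supported) to bound the source uniformly in $\tau$, solve with $H_\tau$ from Lemma~\ref{construction}, and undo the phase by setting $r_0 = e^{-i\tau\tilde{\omega}}\tilde{r}_0$. The only slip is that $P_\tau \tilde{v}_0 = e^{i\tau\tilde{\omega}}\,e^{-\tau\Phi}(\triangle_g - q_*)(e^{\tau\Phi}v_0) = e^{i\tau\tilde{\omega}}(\triangle_g - q_*)v_0$ rather than $(\triangle_g-q_*)v_0$ itself --- a unimodular, $\tau$-dependent factor that is harmless since only its modulus enters the estimate, which is precisely the point the paper flags by noting that $\phi_0 - \Phi$ is purely imaginary.
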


\begin{proof}

Let us first consider solving the equation $$ P_{\tau} r =  e^{-\tau \phi_0} (\triangle_g-q_*) (e^{\tau\phi_0} r)  =-e^{-\tau (\phi_0 - \Phi )   } e^{-\tau \Phi} (\triangle_g-q_*) (e^{\tau\Phi} v_{0})$$
Since $v_{0}$ is compactly supported in the region $V$:
$$e^{-\tau \Phi} (\triangle_g-q_*) (e^{\tau \Phi} v_{0})=  \tau^2 \langle d\Phi,d\Phi\rangle_g v_0 + \tau [ 2\langle d\Phi,dv_0 \rangle_g + (\triangle_g \Phi) v_0 ] + \triangle_g v_0-q_*v_0$$
Hence using the construction formulas for $\Phi$ and $v_0$ and noting that $\phi_0 - \Phi$ is purely imaginary, we can immediately conclude that $ \|  e^{-\tau \Phi} (\triangle_g-q_*) e^{\tau\Phi} v_{0} \|_{L^2(\Omega)} \leq C$ for some constant $C$. 
This is simply due to the fact that in $V$ we have the following:
$$ \langle d\Phi,d\Phi\rangle_g=0$$
$$2\langle d\Phi,dv_0 \rangle_g + (\triangle_g \Phi) v_0=0$$

\noindent  Let $$r= -H_{\tau}(e^{-\tau (\phi_0 - \Phi  )   } e^{-\tau \Phi} (\triangle_g-q_*) (e^{\tau\Phi} v_0))$$
 We can now choose $ r_{0} = e^{\tau (\phi_0 - \Phi)} r$ to conclude the proof.\\

\end{proof}

\begin{lemma}
\label{schrodinger1}
Let $q_1 \in L^{\infty} (\Omega)$. For $\tau>0$ sufficiently large, there exists solutions $u_1$ of $(-\triangle_g + q_1)u_1=0$ of the form $ u_1 = e^{\tau \Phi} (v_0 + r_1)$ where $r_1|_{\partial \Omega \setminus \Gamma}=0$ and  $\|r_1\|_{L^2(\Omega)} \leq \frac{C}{\tau}$.
\end{lemma}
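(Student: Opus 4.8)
The plan is to mirror the construction in Lemma ~\ref{harmonic1}, treating the difference $V := q_1 - q_* \in L^{\infty}(\Omega)$ as a lower order perturbation that will be absorbed by the gain of $\tau^{-1}$ coming from Lemma ~\ref{construction}. First I would seek $u_1$ in the stated form $u_1 = e^{\tau\Phi}(v_0+r_1)$ and substitute $r_1 = e^{\tau(\phi_0-\Phi)}r = e^{-i\tau\tilde{\omega}}r$ with $r \in H_{\triangle}(\Omega)$ to be determined. Since $\Phi = \phi_0 + i\tilde{\omega}$ with $\tilde{\omega}$ real, the factor $e^{-i\tau\tilde{\omega}}$ has unit modulus, so the $L^2$ norm and the vanishing on $\partial\Omega\setminus\Gamma$ are preserved when passing between $r$ and $r_1$.

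Next I would compute the equation satisfied by $r$. Writing $(-\triangle_g+q_1)u_1 = 0$ as $(\triangle_g - q_*)(e^{\tau\Phi}(v_0+r_1)) = V e^{\tau\Phi}(v_0+r_1)$ and multiplying by $e^{-\tau\phi_0}$, the identity $e^{\tau\Phi}r_1 = e^{\tau\phi_0}r$ turns the left-hand side into $e^{-\tau\phi_0}(\triangle_g-q_*)(e^{\tau\Phi}v_0) + P_\tau r$, while the right-hand side becomes $Ve^{i\tau\tilde{\omega}}v_0 + Vr$. Hence $u_1$ solves the Schr\"odinger equation if and only if
\[ P_\tau r = V r + f_0, \qquad f_0 := V e^{i\tau\tilde{\omega}} v_0 - e^{-\tau\phi_0}(\triangle_g - q_*)(e^{\tau\Phi} v_0). \]
Exactly as in Lemma ~\ref{harmonic1}, the eikonal equation $\langle d\Phi,d\Phi\rangle_g = 0$ and the transport equation $2\langle d\Phi, dv_0\rangle_g + (\triangle_g\Phi)v_0 = 0$ hold on the support of $v_0$, so $\|e^{-\tau\phi_0}(\triangle_g-q_*)(e^{\tau\Phi}v_0)\|_{L^2(\Omega)} \le C$ uniformly in $\tau$; together with $\|Ve^{i\tau\tilde{\omega}}v_0\|_{L^2} \le \|V\|_{L^\infty}\|v_0\|_{L^2}$ this shows $\|f_0\|_{L^2(\Omega)} \le C$ with $C$ independent of $\tau$.

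It then remains to solve the fixed-point equation. Applying the solution operator $H_\tau$ of Lemma ~\ref{construction}, a function $r$ carrying the required side conditions solves $P_\tau r = Vr + f_0$ provided $r = H_\tau(Vr) + H_\tau f_0$, i.e. $(I - H_\tau V)r = H_\tau f_0$. By Lemma ~\ref{construction} the map $r \mapsto H_\tau(Vr)$ obeys $\|H_\tau(Vr)\|_{L^2} \le C\tau^{-1}\|V\|_{L^\infty}\|r\|_{L^2}$, so for $\tau$ large enough one has $C\tau^{-1}\|V\|_{L^\infty} < \tfrac{1}{2}$ and $I - H_\tau V$ is invertible on $L^2(\Omega)$ by a Neumann series. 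This yields a unique $r = (I-H_\tau V)^{-1}H_\tau f_0$ with $\|r\|_{L^2(\Omega)} \le 2\|H_\tau f_0\|_{L^2} \le C\tau^{-1}\|f_0\|_{L^2} \le C'\tau^{-1}$. Setting $r_1 = e^{-i\tau\tilde{\omega}}r$ then gives a solution of the stated form with $r_1|_{\partial\Omega\setminus\Gamma} = 0$ and $\|r_1\|_{L^2(\Omega)} = \|r\|_{L^2(\Omega)} \le C'\tau^{-1}$.

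The only genuinely new ingredient compared with Lemma ~\ref{harmonic1} is the Neumann-series step, and the main point to watch is that the fixed perturbation $V$ is truly absorbed: this works precisely because Lemma ~\ref{construction} provides an inverse of operator norm $O(\tau^{-1})$, which forces multiplication by the bounded $V$ to act as a contraction for large $\tau$. I do not anticipate any serious obstacle beyond carefully tracking that the conjugating factor $e^{-i\tau\tilde{\omega}}$ preserves both the $L^2$ estimate and the boundary condition on $\partial\Omega\setminus\Gamma$.
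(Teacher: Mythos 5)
Your proposal is correct and follows essentially the same route as the paper: both reduce the problem to the perturbed equation $P_\tau r = (q_1-q_*)r + f$ with an $O(1)$ right-hand side (via the eikonal and transport equations on the support of $v_0$), solve it by a Neumann series exploiting $\|H_\tau\| \leq C\tau^{-1}$, and recover $r_1 = e^{-i\tau\tilde{\omega}}r$ through the unimodular conjugating factor. The only cosmetic difference is that the paper uses the ansatz $r = H_\tau\tilde{r}$ and inverts $-I + (q_1-q_*)H_\tau$ on $\tilde{r}$, whereas you invert $I - H_\tau V$ on $r$ directly; these are equivalent by standard resolvent algebra.
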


\begin{proof}
Consider the equation:
 $$ - e^{-\tau \phi_0} (\triangle_g-q_1) (e^{\tau\phi_0} r) + q r =e^{-\tau (\phi_0 - \Phi  )   } e^{-\tau \Phi} (-\triangle_g+q_1) (e^{\tau \Phi} v_0)=:f $$
but since $v_{0}$ is compactly supported in $V$:
$$e^{-\tau \Phi} \triangle_g (e^{\tau \Phi} v_{0})=  \tau^2 \langle d\Phi,d\Phi \rangle_g v_0 + \tau [ 2\langle d\Phi,dv_0 \rangle_g + (\triangle_g \Phi) v_0 ] + (\triangle_g-q_1) v_0$$
Recall that in the region $V$ we have the following:
$$ \langle d\Phi,d\Phi\rangle_g=0$$
$$2\langle d\Phi,dv_0 \rangle_g + (\triangle_g \Phi) v_0=0$$ 
Hence, we can immediately conclude that $ \|  e^{-\tau \Phi} \triangle_g (e^{\tau \Phi} v_{0}) -q_1 v_0\|_{L^2(\Omega)} \leq C$ for some constant $C$.

\noindent Motivated by Lemma ~\ref{construction} we try the ansatz $r = H_{\tau} \tilde{r}$ to obtain:
 $$(- I + (q_1-q_*)H_{\tau}) \tilde{r} =f$$
But $H_\tau: L^2(\Omega) \to L^2(\Omega)$ is a contraction mapping for $\tau$ large enough with $\|H_{\tau}\| \leq \frac{C}{\tau}$ and thus for sufficiently large $\tau$ the inverse map $(I+(q_1-q_*)H_{\tau})^{-1}:L^2(\Omega) \to L^2(\Omega)$ exists and it is given by the following infinite Neumann series:
$$ (-I + (q_1-q_*) H_{\tau})^{-1} = -\sum_{j=0}^{\infty} ((q_1-q_*)H_{\tau})^{j}$$

\noindent Hence:

$$ \|(I + (q_1-q_*) H_{\tau})^{-1}\|_{L^2(\Omega) \to L^2(\Omega)}\leq C$$

\noindent So we deduce that if :
$$ r = H_\tau ( I +(q_1-q_*)H_{\tau})^{-1} f$$ 
then if we choose $ r_1 = e^{\tau (\phi_0 - \Phi)} r$  we have that $ u_1 = e^{\tau \Phi} (v_{0} + r_1)$ solves $(-\triangle_g + q_1)u_1=0$ and furthermore:
$$\|r_1\|_{L^2(\Omega)} \leq \frac{C}{\tau}$$
\\
\end{proof}

\noindent Let $\psi_0(x)= -x_1 \chi_0(x) + (F_{\lambda}\circ\omega)(x)$. Note that we have the following estimate as a result of Lemma ~\ref{hormander}:

\[ \| e^{\frac{ \psi_0}{h}} \triangle_g (e^{-\frac{ \psi_0}{h}} v) \|_{L^2(\Omega)} \ge \frac{C}{h} \|v\|_{L^2(\Omega)} + C  \|Dv\|_{L^2(\Omega)}  \]
$\forall h \le h_0$ and $ v \in \mathbb{D}$.\\
\\

\begin{definition}
$$ Q_{\tau} v = e^{-\tau \psi_0} (\triangle_g-q_*) ( e^{\tau \psi_0}v) $$
\end{definition}

\begin{definition}
$\tilde{\pi}_{\tau}$ denotes the orthogonal projection onto:
$$\{ v \in L^2(\Omega): Q_{\tau}v=0, v|_{\partial \Omega \setminus \Gamma}=0\}$$
\end{definition}

\noindent Thus we can state the following Lemma which is a direct parallel to Lemma ~\ref{construction}:

\begin{lemma}
Let $ f \in L^2(\Omega,g)$. There exists a unique function $r:=L_{\tau}f \in H_{\triangle}(\Omega)$ such that:
\begin{itemize}
\item{$Q_{\tau}r =f$   }

\item{ $r|_{\partial \Omega \setminus \Gamma}=0$  }

\item{ $\tilde{\pi}_{\tau} r = 0$}

\end{itemize}

\noindent Furthermore for $\tau$ large enough, $r$ satisfies the estimate:
\[   \|r\|_{L^2(\Omega)} \leq C \tau^{-1} \|f\|_{L^2(\Omega)}\]\\
where the constant $C$ only depends on $\Omega$.
\end{lemma}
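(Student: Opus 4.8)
The plan is to follow the proof of Lemma~\ref{construction} essentially verbatim, replacing the operator $P_\tau$ by $Q_\tau$, the weight $\phi_0$ by $\psi_0$, the projection $\pi_\tau$ by $\tilde\pi_\tau$, and the solution operator $H_\tau$ by $L_\tau$. The only structural input needed is the Carleman estimate for the weight $\psi_0$, which is precisely the displayed estimate stated just before the definition of $Q_\tau$ and which follows from Lemma~\ref{hormander} applied to $\psi_0$. I would begin by recording this, since it is the sole place where the argument genuinely differs from that of Lemma~\ref{construction}: because $\psi_0 = -x_1\chi_0 + F_\lambda\circ\omega$ differs from $\phi_0$ only in the sign of the $x_1\chi_0$ term, one must recheck the H\"ormander hypo-ellipticity condition, but the sign-sensitive Cauchy--Schwarz estimates in the transition region $A_2$ (those bounding $|x_1||\chi_0'|$ against $|F'_\lambda|$, and controlling $-2x_1|\chi_0'|^2\chi_0 - 2\chi_0\chi_0'F'$) go through symmetrically, so the same conclusion holds and the Carleman estimate for $\psi_0$ is valid.

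For uniqueness, suppose $r_1, r_2$ are two solutions and set $w = r_1 - r_2$. Then $Q_\tau w = 0$ and $w|_{\partial\Omega\setminus\Gamma} = 0$, so $w$ lies in the subspace onto which $\tilde\pi_\tau$ projects, giving $\tilde\pi_\tau w = w$. On the other hand the third bullet gives $\tilde\pi_\tau w = 0$, whence $w \equiv 0$.

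For existence I would argue by duality exactly as in Lemma~\ref{construction}. Set $\mathbb{B} = Q_\tau^* \mathbb{D} \subset L^2(\Omega)$ and define the linear functional $L\colon \mathbb{B}\to\mathbb{C}$ by $L(Q_\tau^* v) = \langle v, f\rangle$ for $v\in\mathbb{D}$. The Carleman estimate for $\psi_0$ guarantees that each element of $\mathbb{B}$ has a unique representative $Q_\tau^* v$, so $L$ is well defined, and combined with Cauchy--Schwarz it yields
\[ |L(Q_\tau^* v)| \le \|v\|_{L^2}\,\|f\|_{L^2} \le C\tau^{-1}\|f\|_{L^2}\,\|Q_\tau^* v\|_{L^2} \]
for $\tau$ large. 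Hence $L$ is bounded; extending it by continuity to $\overline{\mathbb{B}}$ and by zero on the orthogonal complement of $\mathbb{B}$ in $L^2(\Omega)$ produces a bounded functional $\hat L$ on $L^2(\Omega)$ of norm at most $C\tau^{-1}\|f\|_{L^2}$. The Riesz representation theorem then furnishes $r = L_\tau f$ with $\|r\|_{L^2}\le C\tau^{-1}\|f\|_{L^2}$ and $(1-\tilde\pi_\tau)r = r$, i.e. $\tilde\pi_\tau r = 0$. Testing against $v\in C^\infty_c(\Omega)$ gives $\langle v, Q_\tau r\rangle = \langle Q_\tau^* v, r\rangle = \hat L(Q_\tau^* v) = \langle v, f\rangle$, so $Q_\tau r = f$ in the weak sense; in particular $\triangle_g r \in L^2(\Omega)$, whence $r \in H_\triangle(\Omega)$.

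Finally, to recover the boundary condition $r|_{\partial\Omega\setminus\Gamma} = 0$, I would pair against general $v\in\mathbb{D}$ and invoke Green's identity: from $\langle Q_{-\tau} v, r\rangle = \langle v, f\rangle$ and
\[ \langle Q_{-\tau} v, r\rangle = \langle v, Q_\tau r\rangle + \int_{\partial\Omega\setminus\Gamma} (\partial_\nu v)\, r, \]
together with $Q_\tau r = f$, one concludes that the boundary integral vanishes for all $v\in\mathbb{D}$, which forces $r|_{\partial\Omega\setminus\Gamma} = 0$. The main (and really the only) obstacle is the verification flagged in the first paragraph, namely that $\psi_0$ is a legitimate Carleman weight; once that is in hand, every remaining step is identical to the proof of Lemma~\ref{construction}.
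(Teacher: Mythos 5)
Your proposal is correct and takes essentially the same approach as the paper: the paper gives no separate proof, simply stating the lemma as a ``direct parallel'' to Lemma~\ref{construction} after noting that the Carleman estimate for $\psi_0$ follows from Lemma~\ref{hormander}, which is precisely your route (uniqueness via the projection, existence via the Hahn--Banach/Riesz duality argument, and the boundary condition via Green's identity). Your explicit check that the sign flip $x_1\chi_0 \mapsto -x_1\chi_0$ leaves the H\"ormander verification intact --- since the transition-region bounds only involve $|x_1||\chi_0'|$ and $|\chi_0\chi_0'|$ --- is a detail the paper leaves implicit, and it is sound.
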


\noindent Let $\Psi = \psi_0 - i \tilde{\omega}$. Notice that for $x \in V $ we have that $ \Psi= -x_1 - i x_2$ and that $\Re(\Psi)= \psi_0$. Finally we note that for $x \in V$ we have:
$$  \langle d\Psi, d\Psi \rangle_g  = 0$$ and:
$$ 2  \langle d\Psi, dv_0 \rangle_g + (\triangle_g \Psi)v_0 = 0$$
\\
\noindent Thus we can state the following corollary to Lemma ~\ref{schrodinger1}:

\begin{corollary}
Let $q_2 \in L^{\infty} (\Omega)$. For all $\tau>0$ sufficiently large, there exists solutions $u_2$ to $(-\triangle_g + q_2)u_2=0$ of the form $ u_2 = e^{\tau \Psi} (v_0 + r_2)$
where $\|r_2\|_{L^2(\Omega)} \leq \frac{C}{\tau}$.\\
\end{corollary}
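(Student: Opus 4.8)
The plan is to repeat, essentially verbatim, the argument of Lemma~\ref{schrodinger1}, with the weight $\Phi=\phi_0+i\tilde\omega$ replaced by $\Psi=\psi_0-i\tilde\omega$, the conjugated operator $P_\tau$ replaced by $Q_\tau$, and the solution operator $H_\tau$ replaced by $L_\tau$. The structural ingredients are already available: the preceding Lemma provides $L_\tau$ as a right inverse of $Q_\tau$ with $\|L_\tau\|_{L^2\to L^2}\le C\tau^{-1}$, and the two identities recorded just above the statement, $\langle d\Psi,d\Psi\rangle_g=0$ and $2\langle d\Psi,dv_0\rangle_g+(\triangle_g\Psi)v_0=0$ on $V$, are exactly the eikonal and transport relations that were used for $\Phi$. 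The one new bookkeeping fact is that $\psi_0-\Psi=i\tilde\omega$ is purely imaginary, the mirror of the relation $\phi_0-\Phi=-i\tilde\omega$ exploited before, so that multiplication by $e^{\tau(\psi_0-\Psi)}$ is an isometry on $L^2$.

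First I would insert the ansatz $u_2=e^{\tau\Psi}(v_0+r_2)$ into $(-\triangle_g+q_2)u_2=0$ and, writing $r_2=e^{\tau(\psi_0-\Psi)}r$, reduce as in Lemma~\ref{schrodinger1} to the equation $(-Q_\tau+(q_2-q_*))r=f$ for a correction term $r$, where
$$ f:=e^{-\tau(\psi_0-\Psi)}e^{-\tau\Psi}(-\triangle_g+q_2)(e^{\tau\Psi}v_0). $$
Expanding
$$ e^{-\tau\Psi}(-\triangle_g)(e^{\tau\Psi}v_0)=-\tau^2\langle d\Psi,d\Psi\rangle_g v_0-\tau\bigl[2\langle d\Psi,dv_0\rangle_g+(\triangle_g\Psi)v_0\bigr]-\triangle_g v_0 $$
and using that $v_0$ is supported in $V$, the two identities above annihilate the $\tau^2$ and $\tau$ terms, leaving only the $O(1)$ contribution $-\triangle_g v_0+q_2 v_0$. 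Since the prefactor $e^{-\tau(\psi_0-\Psi)}=e^{-i\tau\tilde\omega}$ has modulus one, this yields $\|f\|_{L^2(\Omega)}\le C$ uniformly in $\tau$.

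Next I would invoke the solution operator through the ansatz $r=L_\tau\tilde r$, so that $Q_\tau r=\tilde r$ and the equation becomes $(-I+(q_2-q_*)L_\tau)\tilde r=f$, exactly as in Lemma~\ref{construction} and Lemma~\ref{schrodinger1}. Because $q_2-q_*\in L^\infty(\Omega)$ and $\|L_\tau\|_{L^2\to L^2}\le C\tau^{-1}$, the operator $(q_2-q_*)L_\tau$ has operator norm $O(\tau^{-1})$, so for $\tau$ large the operator $-I+(q_2-q_*)L_\tau$ is invertible via a convergent Neumann series with uniformly bounded inverse. Setting $r=L_\tau(-I+(q_2-q_*)L_\tau)^{-1}f$ gives $\|r\|_{L^2}\le C\tau^{-1}\|f\|_{L^2}\le C\tau^{-1}$, and then $r_2=e^{\tau(\psi_0-\Psi)}r=e^{i\tau\tilde\omega}r$ inherits the bound $\|r_2\|_{L^2}\le C\tau^{-1}$ since $|e^{i\tau\tilde\omega}|=1$. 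The vanishing of $r_2$ on $\partial\Omega\setminus\Gamma$ descends from the corresponding property built into $L_\tau$.

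Since every step is a direct transcription of the $\Phi$-case, I do not anticipate a genuine obstacle; the only point requiring care is verifying that the conjugating factor $e^{\tau(\psi_0-\Psi)}$ is purely oscillatory, i.e.\ that $\psi_0-\Psi=i\tilde\omega$ is imaginary, so that neither the uniform bound on $f$ nor the final estimate on $r_2$ is degraded by the reflected choice of weight. This is precisely the counterpart of the identity used for $\Phi$, and it hinges on the fact that $\Re(\Psi)=\psi_0$ remains a valid Carleman weight, which is guaranteed by Lemma~\ref{hormander} applied to $\psi_0=-x_1\chi_0+(F_\lambda\circ\omega)$.
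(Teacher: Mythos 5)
Your proposal is correct and matches the paper's intent exactly: the paper offers no separate argument, presenting the statement as an immediate corollary of Lemma~\ref{schrodinger1} obtained by swapping $(\Phi,P_\tau,H_\tau)$ for $(\Psi,Q_\tau,L_\tau)$ and using the eikonal and transport identities for $\Psi$ recorded just above, which is precisely your transcription. Your added care about $\psi_0-\Psi=i\tilde\omega$ being purely imaginary and about $\psi_0$ being a valid Carleman weight (the estimate the paper asserts via Lemma~\ref{hormander}) is exactly the right bookkeeping and introduces no deviation from the paper's route.
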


\section{Proof Of Uniqueness}

\begin{proof}[Proof of Theorem ~\ref{partial}]

Suppose $q_1,q_2 \in C(\overline{\Omega})$ satisfy $C_{q_1}^{\Gamma,\Gamma} = C^{\Gamma,\Gamma}_{q_2}$.  Let us use the Green's identity to $ u_1 = e^{\tau \Phi} (v_0 + r_1)$ and $u_2=e^{\tau \Psi}(v_0+r_2)$. Thus:
$$  I_{\tau} = \int_{\partial\Omega} u_2 \partial_{\nu} u_1  - \int_{\partial\Omega} u_1 \partial_{\nu} u_2 = \int_{\Omega} u_2 \triangle_g u_1 - \int_{\Omega} u_1 \triangle_g u_2 $$
Let $q:=q_1-q_2$. Since $q_1|_{V^c}=q_2|_{V^c}=q_*|_{V^c}$ we have:
$$ I_{\tau}= \int_{V} q u_1 u_2.  $$

\noindent Note that since $C_{q_1}^{\Gamma,\Gamma} = C^{\Gamma,\Gamma}_{q_2}$ and since $u_1|_{\partial \Omega \setminus \Gamma} = u_2|_{\partial \Omega \setminus \Gamma}=0$ we have that:
$$ I_{\tau} =0 $$
\noindent So:
$$0= \int_{V} q u_1 u_2 = \int_{V} q (v_0+r_1)(v_0+r_2).$$
\noindent Using the Cauchy-Schwarz inequality we see that:
$$|\int_{V} q r_1 r_2| \leq \frac{C}{\tau^2}$$
$$|\int_{V} q r_1 v_0| \leq \frac{C}{\tau}$$
$$|\int_{V} q r_2 v_0| \leq \frac{C}{\tau}.$$
\noindent Thus by taking the limit as $\tau \to \infty$ we obtain:

$$ 0 = \int_{V} q v_0^2. $$ 

\noindent Recall that $v_0(x)= h(z) \chi(x_3)$. Thus we see that by choosing $\chi(x_3)$ approximating a delta distribution we have the following:
$$\int_{\Pi} q h(z) \equiv 0. $$

\noindent In particular this implies that given any plane in the convex hull of $\Gamma$, intergrals of the function $q$ on the plane vanishes. At this point, one can use the local injectivity of the Radon transform for continuous functions of compact support (see for example \cite{H}) to conclude that:
 $$q|_U \equiv 0.$$
\end{proof}

\bigskip
\noindent The methods presented in this paper are quite robust. Let us now state a few remarks about possible generalizations of Theorem ~\ref{partial}:

\begin{remark}
The results in this paper can easily be generalized to higher dimensions, $n\geq 3$. One can indeed produce similar CGO solutions concentrating on two-planes for any $n \geq 3$ and use \cite{H} to obtain uniqueness of the potential. 
\end{remark}

\begin{remark}
The proof presented here is not constructive. One can give a reconstruction algorithm for the CGO solutions at the boundary from the local Dirichlet to Neumann map through the approach that we introduced in \cite{Feiz}. The method in that paper uses an artificial extension of the manifold that produces a boundary integral equation. That approach can be adapted here without much difficulty. The key difference would be the need for a new proof of Lemma 8.5 as we are working in less regular Sobolev spaces here.
\end{remark}

\begin{remark}

In the spirit of the results obtained in \cite{Feiz}, one can generalize the results in this paper to the setting where $\Gamma$ is not connected. Similarly one can generalize the results to the case where $U$ is conformally transversally anisotropic.

\end{remark}

\begin{remark}
It may be possible to adjust the arguments slightly to provide a logarithmic stability estimate for the partial data problem as well.
\end{remark}



\newpage


\end{document}